\newcommand{\bx}{\mathbf{x}}
\newcommand{\bu}{\mathbf{u}}
\newcommand{\n}{\mathbf{n}}
\newtheorem{defn}{Definition}
\newtheorem{lem}{Lemma}
\newtheorem{thm}{Theorem}
\begin{document}

\title{A Constrained Least-Squares Ghost Sample Points (CLS-GSP) Method for Differential Operators on Point Clouds}

\author{
Ningchen Ying\thanks{Department of Mathematics, the Hong Kong University of Science and Technology, Clear Water Bay, Hong Kong. Email: {\bf nying@connect.ust.hk}}
\and
Kwunlun Chu\thanks{Department of Mathematics, Statistics and Insurance, Hang Seng University of Hong Kong, Hong Kong. Email: {\bf alanchu@hsu.edu.hk}}
\and
Shingyu Leung\thanks{Department of Mathematics, the Hong Kong University of Science and Technology, Clear Water Bay, Hong Kong. Email: {\bf masyleung@ust.hk}}
}
\date{}
\maketitle

\begin{abstract}
We introduce a novel meshless method called the Constrained Least-Squares Ghost Sample Points (CLS-GSP) method for solving partial differential equations on irregular domains or manifolds represented by randomly generated sample points. Our approach involves two key innovations. Firstly, we locally reconstruct the underlying function using a linear combination of radial basis functions centered at a set of carefully chosen \textit{ghost sample points} that are independent of the point cloud samples. Secondly, unlike conventional least-squares methods, which minimize the sum of squared differences from all sample points, we regularize the local reconstruction by imposing a hard constraint to ensure that the least-squares approximation precisely passes through the center. This simple yet effective constraint significantly enhances the diagonal dominance and conditioning of the resulting differential matrix. We provide analytical proofs demonstrating that our method consistently estimates the exact Laplacian. Additionally, we present various numerical examples showcasing the effectiveness of our proposed approach in solving the Laplace/Poisson equation and related eigenvalue problems.
\end{abstract}

\section{Introduction}

Classical numerical methods for partial differential equations (PDEs) usually rely on a structured computational mesh. Finite difference methods (FDM) and finite element methods (FEM), in particular, assume a certain degree of connectivity among sample points and necessitate the computational domain to be properly discretized or triangulated. Due to the connectivity between these sample points, mesh surgery or developing a robust remeshing strategy may be necessary for mesh adaptation, which can be challenging, especially in high dimensions.

Meshless methods \cite{bkofk96,wen05,fas07,nrbd08}, therefore, present a more computationally attractive alternative when we lack control over the allocation of sample points or when obtaining a good discretization of the computational domain is problematic. These meshless methods can be roughly classified into two groups. The first group is the so-called generalized finite difference (GFD) methods \cite{for88,for98}, which extend the traditional FDM based on classical Taylor's expansion. Further studies on this method have been conducted in \cite{benuregav01,benuregav07}. A more popular group follows a similar approach but replaces the standard basis in Taylor's expansion with radial basis functions (RBF) \cite{har71,for96,buh03}. The idea is to locally represent the surface by interpolating the data points using a set of RBFs centered at each sample point. These RBF interpolation methods have demonstrated the ability to achieve spectral accuracy under certain assumptions regarding the form of the bases and the properties of the underlying functions \cite{buhnir93,forfly05}. To expedite computations, the so-called radial basis function finite-differences (RBF-FD) method was proposed by \cite{wanliu02,shudinyeo03,wri03}. The idea is to replace the global interpolation described above with a local interpolation, such that the reconstruction and derivative depend only on values in a small neighborhood around the center of the approximation. The first step of this approach is to gather a subset of sample points around the center using any standard algorithm such as K-nearest neighbors (KNN). Then, the global RBF approach is replaced by this smaller subset of points. 

This paper extends the work developed in \cite{yin17} and introduces a novel meshless method that offers several key advantages. Firstly, they exhibit robustness when given sample points are close to each other; the methods remain stable even if sample points become arbitrarily close or overlap. Secondly, they are local methods, meaning that the approximation to the difference operator depends only on sample points in a small neighborhood of the center. Therefore, the methods are computationally efficient and optimal in computational complexity. Finally, although the methods share some similarities with RBF methods, we must emphasize that our proposed methods differ from conventional RBF-type methods. Specifically, our methods do not interpolate data points but instead approximate the underlying function based on a least-squares approach.

We introduce our proposed approach as the \textit{constrained least-squares ghost sample points} (CLS-GSP) method. It comprises two main components. Initially, we represent a local approximation as a linear combination of a set of RBFs. However, unlike typical RBF interpolation methods (or even RBF-FD methods), where the centers of these RBFs coincide with the sample locations, we suggest replacing these locations with a pre-chosen set of \textit{ghost sample points}. These ghost sample points can be arbitrarily selected if they are well-separated around the center. This effectively mitigates the problem of co-linearity of the basis functions and the ill-conditioning of the coefficient matrix. To enhance the robustness of the algorithm, we propose replacing the interpolation problem with a least-squares problem, minimizing the sum of squared mismatches at all ghost sample points in the local reconstruction to the function value. This substitution allows for greater flexibility in the sampling locations, including the allowance for overlapping given sample points.

It is essential to reiterate again that our method differs from typical RBF interpolation approaches in that our reconstruction does not interpolate all function values but only at the center location where the local approximation is sought. This implies that spectral accuracy cannot be expected in our method. One might argue against replacing interpolation with least-squares fitting, which sacrifices this spectral accuracy. However, spectral accuracy is contingent upon certain conditions being met \cite{forflyrus08,flyleh10}, including the smoothness of the underlying function and the distribution of sample points. Addressing the first concern, one might consider only linear elliptic problems, which imply a smooth solution. However, regarding the second issue, we typically have less control over the distribution of the point cloud and can only assume that the sampling density satisfies certain regularity conditions. Our proposed approach, therefore, offers a simple alternative to approximate derivatives of a function with reasonable accuracy in many applications where these assumptions are not satisfied. 

The second major component of our proposed method emphasizes the contribution of the function value at the center location of each reconstruction. Inspired by the approach developed in a series of studies \cite{leuzha0801,leulowzha11,wanleuzha18}, we observe that increasing the weighting at the center point in the least-squares (LS) reconstruction improves both the accuracy and the diagonal dominance of the differential matrix (DM). In this work, we propose a straightforward idea to enforce a hard constraint: the least-squares reconstruction must pass through the function value at the center location. This concept shares similarities with the local tangential lifting method \cite{wuchiche10}, developed for estimating surface curvature and function derivatives on \textit{triangulated surfaces}. This method has also been recently applied to approximate differential operators and solve PDEs on surfaces \cite{chewu13,chechiwu15,chewu16}, albeit solely for triangulated surfaces, whereas we do not require any connectivity in the sampling points.

This paper is organized as follows. 
In Sections \ref{CLSRBF}, we will discuss our proposed Least-Squares Ghost Sample Points (LS-GSP) and Constrained Least-Squares Ghost Sample Points (CLS-GSP) Methods. This section will provide a more detailed comparison of our approach with various methods. Some theoretical properties of these methods will be provided in Section \ref{Sec:Consistency}. In particular, we will examine its consistency when approximating the standard Laplacian. Some numerical experiments are presented in Section \ref{NumEx} to demonstrate the effectiveness and robustness of our proposed approach. Finally, we provide a summary and conclusion in Section \ref{Summary}.

\section{Our Proposed Methods}
\label{CLSRBF}

We introduce two new ideas for approximating differential operators on randomly distributed sample points. Firstly, in Section \ref{SubSec:LS-GSP}, we introduce the idea of ghost sample points and their use in function reconstruction based on least-squares. Like other LS methods, this approach does not guarantee that the local representation will pass through any data point, particularly the center point. Consequently, the approximation to the difference operator may be inconsistent with the exact value. Therefore, we propose a further enhancement called the constrained least-squares radial basis function method (CLS-GSP) in Section $\ref{SubSec:CLS-GSP}$ to improve the reconstruction. Although we focus mostly on the two-dimensional case in the following discussions, generalizing to higher dimensions is straightforward.

\begin{figure}[!ht]
\centering
\includegraphics[width=0.4\textwidth ]{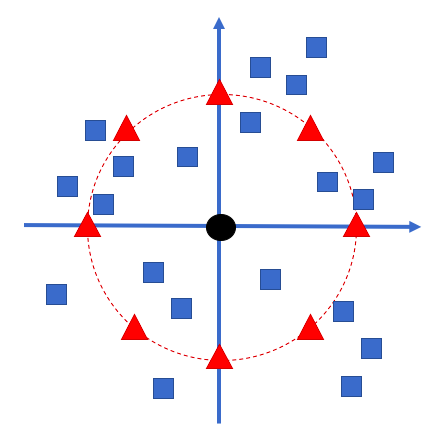}
\caption{The graphical representation of ghost sample points. The black dot represents the center point $\mathbf{x}_0$. Blue squares denote the $n$ nearest scattered data points around $\mathbf{x}_0$. Red triangles correspond to the proposed $d$ ghost sample points associated with the center point $\mathbf{x}_0$. Here, we pick $d=8$ ghost sample points uniformly distributed on a ball with a radius given by the mean distance to the sample points.}
\label{Fig:GhostPoints}
\end{figure}

\subsection{The Least-Squares Ghost Sample Points (LS-GSP) Method} 
\label{SubSec:LS-GSP}

Approximating a given differential operator at $\mathbf{x}=\mathbf{x}_0$ involves two primary steps. The first step entails introducing a set of ghost sample points, followed by a least-squares approximation based on RBF centered at these ghost sample points.

We first gather a subset of sample points ${\mathbf{x}_i,i=1,\ldots,n}$ within a small neighborhood of $\mathbf{x}_0$. Within this neighborhood, we introduce a set of $d$ basis functions, each centered at a predefined location denoted by ${\hat{\mathbf{x}}_m}$ for $m=1,\ldots,d$. These $d$ locations are referred to as the \textit{ghost sample points} associated with the center location $\mathbf{x}=\mathbf{x}_0$. A typical arrangement is illustrated in Figure \ref{Fig:GhostPoints}. The selection of these locations offers considerable flexibility. This work suggests two simple choices for allocating these ghost sample points. The first one is to have ${\hat{\mathbf{x}}_m,m=1,\ldots,8}$ (i.e., $d=8$) evenly distributed on the boundary of a ball centered at $\mathbf{x}_0$ with the radius given by the average distance from the sample points ${\mathbf{x}_i,i=1,\ldots,n}$ to the center. The second one is to have them uniformly located in a small neighborhood centered at the center point. Figure \ref{Fig.Poisson0}(b) shows a distribution with 49 ghost sample points uniformly placed in a disc. This allows these ghost sample points to spread widely and evenly in the local neighborhood. Indeed, one might randomly select a certain fraction of sample points as ghost sample points and determine a stable local approximation of the function using cross-validation. This approach, however, might significantly increase computational time and will not be further investigated here.

The method for selecting these ghost sample points in higher dimensions can also be quite natural. For instance, in three dimensions, we might opt for a sphere of a given radius and uniformly distribute ghost sample points on its surface. Determining these locations can be achieved using Thomson's solution, conceptualizing each ghost sample point as a charged particle that repels others according to Coulomb's law. Explicit coordinates of some configurations are available and can be easily found in the public domain.

Next, we introduce a radial basis function with each ghost sample point and reconstruct the underlying function based on the space spanned by these basis functions. The major difference from the typical RBF approach is that we propose choosing $d \ll n$ so that the typical interpolation problem at ${\mathbf{x}_i,i=1,\ldots,n}$ becomes a least-squares problem. Specifically, we reconstruct the function in the following form:
\begin{equation}
u_c(\mathbf{x}) = \sum_{m=1}^d \lambda_m \phi(\| \mathbf{x}-\hat{\mathbf{x}}_m \|) + \sum_{j=0}^{l} \mu_j P_j(\mathbf{x}),
\label{Eqn:Fitting-LSRBF}
\end{equation}
and aim to determine the unknown coefficients $\lambda_m$ and $\mu_j$ by minimizing the sum of mismatches $\left[u_c(\mathbf{x})-u(\mathbf{x})\right]^2$ at the sample locations $\mathbf{x}_i$, along with the extra polynomial constraints as in RBF-FD. However, since we are considering a least-squares problem after all, our approach might seem to work even if we do not incorporate extra polynomials into the set of basis functions. There are two reasons why we propose to include them. Firstly, this allows our method to approach the standard RBF-FD as $d$ approaches $n$ and $\hat{\mathbf{x}}_m$ approaches $\mathbf{x}_i$. Therefore, our method recovers all the desirable properties, including the stability behavior of RBF-FD. The more significant reason, however, relates to the consistency of the approximation, which will be fully discussed later in Section \ref{Sec:Consistency}.

The choice of the mismatch in $u(\mathbf{x})$ with $u_c(\mathbf{x})$ is clearly not unique. However, emphasizing local information might be more desirable by imposing soft constraints in applications where the samples contain noise. For instance, we can introduce a weight function such as $\kappa(\mathbf{x}_i)=\exp(-\gamma \| \mathbf{x}_i \|)$ to assign fewer weights to those sample points far away from the center point. Then, the corresponding minimization problem becomes  $\sum_{i=1}^n \kappa(\bx_i) \left[ u(\bx_i)-u_c(\bx_i) \right]^2$. However, the current paper will not further investigate this moving least-squares-type approach.

Mathematically, we find the least-squares solution to 
$$
\begin{pmatrix}
\Phi & \textbf{P}\\
\hat{\textbf{P}}^T & \mathbf{0}
\end{pmatrix}
\begin{pmatrix}
\boldsymbol\lambda\\
\boldsymbol\mu
\end{pmatrix}
=
\begin{pmatrix}
\bu \\
\mathbf{0}
\end{pmatrix}
$$
where
\begin{eqnarray*}
& & \Phi = \begin{pmatrix}
\phi\left(\|\bx_1-\hat{\bx}_1\|\right) & \hdots & \phi\left(\|\bx_1-\hat{\bx}_d\|\right) \\
\vdots & \vdots&\vdots \\
\phi\left(\|\bx_n-\hat{\bx}_1\|\right) & \hdots & \phi\left(\|\bx_n-\hat{\bx}_d\|\right)
\end{pmatrix} \, , \, 
\textbf{P}=\begin{pmatrix}
1 & x_1 & y_1 & x_1^2 & x_1y_1 & \hdots\\
\vdots & \vdots & \vdots & \vdots & \vdots \\
1 & x_n & y_n & x_n^2 & x_ny_n & \hdots\\
\end{pmatrix} \, , \\
& & \hat{\textbf{P}}=\begin{pmatrix}
1 & \hat{x}_1 & \hat{y}_1 & \hat{x}_1^2 & \hat{x}_1\hat{y}_1 & \hdots\\
\vdots & \vdots & \vdots & \vdots& \vdots\\
1 & \hat{x}_d & \hat{y}_d & \hat{x}_d^2 & \hat{x}_d\hat{y}_d & \hdots\\
\end{pmatrix}
\, , \, \boldsymbol \lambda=
\begin{pmatrix}
\lambda_1\\
\vdots\\
\lambda_d\\
\end{pmatrix}
\, , \,
\boldsymbol \mu=
\begin{pmatrix}
\mu_0\\
\vdots\\
\mu_k
\end{pmatrix} \, \mbox{ and } \, 
\bu=
\begin{pmatrix}
u(\bx_1)\\
\vdots\\
u(\bx_n)
\end{pmatrix} \, .
\end{eqnarray*}
The size of the matrices $\textbf{P}$ and $\hat{\textbf{P}}$ are given by $n \times ({ l}+1)$ and $d \times ({ l}+1)$, where ${ l}+1$ is the total number of monomials of degree up to $k$ added to the set of basis functions. For example, if we include polynomials in 2 variables ($x$ and $y$) up to degree $k$, we have a total of $l+1=\frac{1}{2}(k+1)(k+2)$ elements of $P_j$. When we have 3 variables ($x$, $y$ and $z$), we have $l+1=\frac{1}{6}(k^3+11k)+k^2+1$ elements of $P_j$ for polynomials up to degree $k$.

We introduce the second set of linear equations $\hat{\textbf{P}}^T \boldsymbol\lambda = \mathbf{0}$ to mimic the conventional RBF-FD method. In particular, if we choose $d=n$ and $\mathbf{x}_i=\hat{\mathbf{x}}_i$, the approach reduces back to the original RBF-FD method with the addition of polynomial basis functions. In this paper, however, we choose $d\ll n$, and the system becomes overdetermined since there are in total $d+{ l}+1$ unknown coefficients, but the system contains $n+{ l}+1$ equations. The least-squares coefficients $\boldsymbol\lambda$ and $\boldsymbol\mu$ can be obtained by
$$
\begin{pmatrix}
\boldsymbol\lambda\\
\boldsymbol\mu
\end{pmatrix}
=
\begin{pmatrix}
\Phi & \textbf{P}\\
\hat{\textbf{P}}^T & \mathbf{0}
\end{pmatrix}^\dagger
\begin{pmatrix}
\bu\\
\mathbf{0}
\end{pmatrix}$$
where $A^\dagger$ is the pseudoinverse of matrix $A$. 

Finally, we consider computing the derivative of the reconstruction. We denote the differential operator by $\mathcal{L}$. Then, applying the operator to the reconstruction and evaluating it at $\bx=\bx_0$, we have $\mathcal{L}u(\bx_0) = \sum_{i=1}^d \lambda_i \mathcal{L}\phi(\| \bx-\hat{\bx}_i \|)\Big\vert_{\bx=\bx_0} +\sum_{j=0}^{{ l}} \mu_j \mathcal{L}P_j(\bx)\Big\vert_{\bx=\bx_0}$ with $\lambda_i$ and $\mu_j$ determined by solving the above least-squares problem. Alternatively, we could rewrite the above procedure and solve the following explicit problem given by $\mathcal{L}u(\bx_0) = \sum_{i=1}^n w_i u(\bx_i)$ where the weights $w_1,{ \ldots}, w_n$ are given by the first $n$ components in 
$$
\begin{pmatrix}
\Phi^T & \hat{\textbf{P}}\\
\textbf{P}^T & \mathbf{0}
\end{pmatrix}^\dagger
\begin{pmatrix}
\mathcal{L}\boldsymbol\phi\vert_{\bx=\bx_0}\\
\mathcal{L}\boldsymbol P\vert_{\bx=\bx_0}\\
\end{pmatrix} \, .
$$

\subsection{The Constrained Least-Squares Ghost Sample Points (CLS-GSP) Method}
\label{SubSec:CLS-GSP}

The LS-GSP method introduced in the above section works reasonably well even if the given sampling locations $\mathbf{x}_i$ are badly distributed. However, we observe that the approximation to any linear operator might be inaccurate. A similar behavior has been observed in \cite{leuzha0801,leulowzha11,wanleuzha18}. The main reason is that the least-squares approach does not necessarily provide enough weighting from the center location. This has motivated the development of the modified virtual grid difference method (MVGD) \cite{wanleuzha18}, in which we explicitly increase the least-squares weighting at the center location.

In this work, we propose to enforce that the least-squares reconstruction has to pass through the function value $(\mathbf{x}_0, u(\mathbf{x}_0))$ exactly. Mathematically, this constraint implies:
$$
u(\bx_0) = \sum_{i=1}^d \lambda_i \phi(\| \bx_0-\hat{\bx}_i \|) +\sum_{j=0}^{l} \mu_j P_j(\bx_0) \, .
$$
Without loss of generality, we assume $\mathbf{x}_0=\mathbf{0}$ for the remaining discussion. Otherwise, we could make a translation $\tilde{\mathbf{x}}=\mathbf{x}-\mathbf{x}_0$ to transform the center point to the origin. The above equation then implies
$u(\mathbf{0}) = \sum_{i=1}^d \lambda_i \phi(\| \hat{\bx}_i \|) +\mu_0$. By subtracting this equation from equation (\ref{Eqn:Fitting-LSRBF}), the CLS-GSP method will fit the function $u$ by:
$$
u(\bx) = u(\textbf{0})+\sum_{i=1}^d \lambda_i \left[\phi\left(\|\bx-\hat{\bx}_i\|\right)-\phi\left(\|\hat{\bx}_i\|\right)\right]+\sum_{j=1}^{ l}\mu_j P_j(\bx) \, .
$$
We emphasize that the set of polynomial bases in CLS-GSP now excludes the constant basis $P_0(\mathbf{x})=1$ corresponding to the index $j=0$. Therefore, when we substitute $\mathbf{x}=\mathbf{0}$ into the right-hand side, the least-squares approximation gives the exact function value $u(\mathbf{0})$ regardless of the values of $\boldsymbol\lambda$ and $\boldsymbol\mu$.

Following a similar practice in the RBF community, we impose additional constraints 
$$\sum_{i=1}^d \lambda_i P_j(\hat{\mathbf{x}}_i) = 0$$ for each $j=1,\ldots,l$ to improve the conditioning of the ultimate least-squares system. Now, we replace the previous set of basis functions by the following modified basis $\psi_i(\mathbf{x})=\phi\left(\|\mathbf{x}-\hat{\mathbf{x}}_i\|\right)-\phi\left(\|\hat{\mathbf{x}}_i\|\right)$, we obtain the coefficients $\boldsymbol\lambda$ and $\boldsymbol\mu$ by finding the least-squares solution of the following overdetermined system of size $(n+l)\times(d+l)$:
$$
\begin{pmatrix}
\boldsymbol\Psi & \textbf{P}\\
\hat{\textbf{P}}^T & \mathbf{0}
\end{pmatrix}
\begin{pmatrix}
\boldsymbol\lambda\\
\boldsymbol\mu
\end{pmatrix}
=
\begin{pmatrix}
\bu\\
\mathbf{0}
\end{pmatrix} 
$$
where we are abusing the notations of $\bu, \textbf{P}$ and $\hat{\textbf{P}}$ given by
\begin{eqnarray*}
& & \boldsymbol\Psi=\begin{pmatrix}
\psi_1(\bx_1) & \psi_2(\bx_1)&\hdots & \psi_d(\bx_1)\\
\psi_1(\bx_2) & \psi_2(\bx_2)&\hdots & \psi_d(\bx_2)\\
\vdots & & \vdots\\
\psi_1(\bx_n) & \psi_2(\bx_n)&\hdots & \psi_d(\bx_n)\\
\end{pmatrix}\quad
\bu=\begin{pmatrix}
u(\bx_1)-u(\mathbf{0})\\
u(\bx_2)-u(\mathbf{0})\\
\vdots\\
u(\bx_n)-u(\mathbf{0})
\end{pmatrix} \, , \\
& & \textbf{P}=
\begin{pmatrix}
P_1(\bx_1) & P_2(\bx_1)&\hdots & P_{ l}(\bx_1)\\
P_1(\bx_2) & P_2(\bx_2)&\hdots & P_{ l}(\bx_2)\\
\vdots & & \vdots\\
P_1(\bx_n) & P_2(\bx_n)&\hdots & P_{ l}(\bx_n)\\
\end{pmatrix}\quad
\text{and}\quad\hat{\textbf{P}}=
\begin{pmatrix}
P_1(\hat{\bx}_1) & P_2(\hat{\bx}_1)&\hdots & P_{ l}(\hat{\bx}_1)\\
P_1(\hat{\bx}_2) & P_2(\hat{\bx}_2)&\hdots & P_{ l}(\hat{\bx}_2)\\
\vdots & & \vdots\\
P_1(\hat{\bx}_d) & P_2(\hat{\bx}_d)&\hdots & P_{ l}(\hat{\bx}_d)\\
\end{pmatrix} \, .
\end{eqnarray*}
The solution can again be found by taking the pseudoinverse of the $(n+{ l})\times(d+{ l})$ matrix.

Finally, applying a differential operator $\mathcal{L}$ to a function defined at $\bx=\mathbf{0}$, we have $\mathcal{L}u(\mathbf{0})\approx \sum_{i=1}^n w_i (u(\bx_i)-u(\mathbf{0}))$ where $w_1,{ \ldots},w_n$ are determined by the first $n$ components in
$$
\begin{pmatrix}
\boldsymbol\Psi^T & \hat{\textbf{P}}\\
\textbf{P}^T & \mathbf{0}
\end{pmatrix}^{\dagger}
\begin{pmatrix}
\mathcal{L}\boldsymbol \psi\vert_{\bx=\mathbf{0}}\\
\mathcal{L}\boldsymbol P\vert_{\bx=\mathbf{0}}\\
\end{pmatrix}
\, \mbox{ where } \,
\mathcal{L}\boldsymbol \psi\vert_{\bx=\mathbf{0}}=
\begin{pmatrix}
\mathcal{L} \psi_1\vert_{\bx=\mathbf{0}}\\
\vdots\\
\mathcal{L} \psi_d\vert_{\bx=\mathbf{0}}
\end{pmatrix}
\, \mbox{and} \,
\mathcal{L}\boldsymbol P\vert_{\bx=\mathbf{0}}=
\begin{pmatrix}
\mathcal{L} P_1\vert_{\bx=\mathbf{0}}\\
\vdots\\
\mathcal{L} P_{ l}\vert_{\bx=\mathbf{0}}
\end{pmatrix} \, .
$$

\subsection{Comparisons with Some Other Approaches}

While our proposed ghost sample points methods incorporate RBF and virtual grids, they differ from typical RBF approaches and our previously developed MVGD method \cite{wanleuzha18}. Here, we now provide a detailed comparison of these approaches.

Conventional RBF algorithms utilize point clouds for \textit{interpolation} of the underlying function. Subsequently, the interpolant undergoes analytical differentiation. In contrast, LS-GSP or CLS-GSP employs \textit{least-squares} to reconstruct the underlying function by specifying the basis functions at \textit{predetermined} locations, which may not align with the data points. Although RBF-FD narrows its focus to a subset of locations within the point cloud for interpolation, the basis functions remain centered at this subset. Consequently, the quality of interpolation could be heavily contingent upon the quality of the provided point cloud sampling and the subsampling step during neighborhood selection.

While MVGD also employs least-squares fitting, it utilizes a polynomial basis for local approximation of the underlying function. Subsequently, finite difference techniques are applied to determine higher-order derivatives of the function, with any center function value substituted by the data value. For instance, assuming $\tilde{u}_i(x)$ represents the local least-squares approximation of the underlying function $u(x)$ at $x_i$, the Laplacian of the function is given by $\left[ \tilde{u}_i(-\Delta x)-2u_i+\tilde{u}_i(\Delta x)\right]/\Delta x^2$, where $\Delta x$ controls the size of the virtual grid, and $u_i$ denotes the given function value at data point $x_i$. There exist multiple distinctions from the method proposed in this work. Firstly, we substitute the polynomial least-squares fitting with an RBF basis centered at well-chosen locations. Consequently, the set of virtual grids (ghost sample points) is not utilized for finite difference calculations; instead, it already plays a role in the least-squares process. Secondly, unlike MVGD, CLS-GSP does not replace the finite difference value with any function value to enhance the diagonal dominance property; rather, we introduce a hard constraint to ensure that the least-squares reconstruction aligns with the function value at the centered location.

\section{A Consistency Analysis}
\label{Sec:Consistency}

To facilitate our discussion, we focus on the standard Laplace operator $\Delta$ due to its significance in numerous applications. While our discussion is limited to two dimensions, extending our approach to higher dimensions is straightforward. The consistency of any other linear differential operator $\mathcal{L}$ can be similarly extended, although we will not delve into those details here.

In this section, we demonstrate the consistency of the CLS-GSP method. Under some regularity conditions, we will analytically show that the CLS-GSP method provides a consistent estimation of the standard Laplacian of a function with order $k-1$, where $k$ is the order of polynomial basis included in the CLS-GSP method.

\subsection{Approximating the Laplace Operator}
Following the discussion above, the Laplace operator is approximated by
\begin{equation}
\Delta u(\mathbf{0})\approx \sum_{i=1}^n w_i (u(\bx_i)-u(\mathbf{0}))
\quad
\text{where}
\quad
\mathbf{w}=\begin{pmatrix}
\boldsymbol\Psi^T & \hat{\textbf{P}}\\
\textbf{P}^T & \mathbf{0}
\end{pmatrix}^{\dagger}
\begin{pmatrix}
\Delta\boldsymbol \psi\vert_{\bx=\mathbf{0}}\\
\Delta \textbf{P}\vert_{\bx=\mathbf{0}}\\
\end{pmatrix} \, .
\label{eqn-laplacian}
\end{equation}

The vector $\mathbf{w}$ has length $n+{ l}$, where $n$ is the number of sample points and ${ l}$ is the number of elements in the polynomial basis up to the $k$-th order in the representation. In approximating the derivative of $u(\mathbf{0})$, on the other hand, we use only the first $n$ elements in $\mathbf{w}$. In particular, if we include only the linear basis ($k=1$ and $l=2$), we have the following system:
$$
\begin{pmatrix}
w_1\\
w_2\\
\vdots\\
w_n\\
w_{n+1}\\
w_{n+2}
\end{pmatrix}
=
\begin{pmatrix}
\psi_1(\bx_1) & \hdots & \psi_1(\bx_n) & \hat{x}_1 & \hat{y}_1\\
\psi_2(\bx_1) & \hdots & \psi_2(\bx_n) & \hat{x}_2 & \hat{y}_2\\
\vdots & & \vdots & \vdots & \vdots\\
\psi_d(\bx_1) & \hdots & \psi_d(\bx_n) & \hat{x}_d & \hat{y}_d\\
x_1 & \hdots & x_n & 0 & 0\\
y_1 & \hdots & y_n & 0 & 0\\
\end{pmatrix}^\dagger
\begin{pmatrix}
\Delta \psi_1\vert_{\bx=\mathbf{0}}\\
\Delta \psi_2\vert_{\bx=\mathbf{0}}\\
\vdots\\
\Delta \psi_d\vert_{\bx=\mathbf{0}}\\
0\\
0\\
\end{pmatrix}
$$
and the following system if we include up to the quadratic basis ($k=2$ and $l=5$), 
$$
\begin{pmatrix}
w_1\\
w_2\\
\vdots\\
w_n\\
w_{n+1}\\
w_{n+2}\\
w_{n+3}\\
w_{n+4}\\
w_{n+5}\\
\end{pmatrix}
=
\begin{pmatrix}
\psi_1(\bx_1) & \hdots & \psi_1(\bx_n) & \hat{x}_1 & \hat{y}_1 & \hat{x}^2_1 & \hat{x}_1 \hat{y}_1 &\hat{y}^2_1\\
\psi_2(\bx_1) & \hdots & \psi_2(\bx_n) & \hat{x}_2 & \hat{y}_2 & \hat{x}^2_1 & \hat{x}_2 \hat{y}_2 &\hat{y}^2_2\\
\vdots & & \vdots & \vdots & \vdots & \vdots & \vdots & \vdots\\
\psi_d(\bx_1) & \hdots & \psi_d(\bx_n) & \hat{x}_d & \hat{y}_d & \hat{x}^2_d & \hat{x}_d \hat{y}_d &\hat{y}^2_d\\
x_1 & \hdots & x_n & 0 & 0 & 0 & 0 &0\\
y_1 & \hdots & y_n & 0 & 0 & 0 & 0 &0\\
x^2_1 & \hdots & x^2_n & 0 & 0 & 0 & 0 &0\\
x_1y_1 & \hdots & x_ny_n & 0 & 0 & 0 & 0 &0\\
y^2_1 & \hdots & y^2_n & 0 & 0 & 0 & 0 &0\\
\end{pmatrix}^\dagger
\begin{pmatrix}
\Delta \psi_1\vert_{\bx=\mathbf{0}}\\
\Delta \psi_2\vert_{\bx=\mathbf{0}}\\
\vdots\\
\Delta \psi_d\vert_{\bx=\mathbf{0}}\\
0\\
0\\
2\\
0\\
2
\end{pmatrix} \, .
$$

\subsection{Consistency}
\label{Def_Order_Consistency}

In the following discussion on consistency, we fix the number of sample points $\mathbf{x}_i=(x_i,y_i)\in\mathbb{R}^2$, where $i=1,\ldots,n$, while shrinking these $n$ points towards the origin by taking $h\mathbf{x}_i$ with $h$ approaching 0. We define $r_i:= h\|\mathbf{x}_i\|_2 = h\sqrt{x_i^2+y_i^2}$. For the ghost sample points chosen evenly distributed on the boundary of a ball, since the radius of this ball equals the average of $r_i$ for $i=1,\ldots,n$, the radius (denoted by $\hat{r}_d$) shrinks to 0 at a rate of $O(h)$. For the second type of distribution where the ghost sample points are chosen evenly within a disc, we can also assume that the radius of the disc is given by the average distance from the sample points to the origin. This also gives the same estimates that $\hat{r}_d=O(h)$.

\begin{defn}
An estimation $\theta_h$ of a quantity $\theta$ is said to be \textit{consistent of order} $k$ if $\vert\theta_h-\theta\vert=O(h^k)$ as $h$ approaches 0.
\end{defn}

The remaining part of this section aims to demonstrate the following consistent property of the approximation to the Laplacian using the CLS-GSP method proposed in the previous section.

\begin{thm}\label{Thm:Consistency}
Assume $u\in C^{k+1}$. Additionally, we assume that the RBF function $\phi$ has the form $\phi(r;c)=f(cr^2)$ with the shape parameter $c$, where $f\in C^2[0,\infty)$. If the sample points $h\mathbf{x}_i$ are taken with $h\to 0$, keeping $ch^2$ constant, and the polynomial basis contains elements up to order $k$, then the approximation (\ref{eqn-laplacian}) is consistent of order $k-1$, i.e.,
$\left\vert \mathbf{w}_A^T \bu - \Delta u(\mathbf{0}) \right\vert=O\left(h^{k-1}\right)$ as $h$ approaches to 0.
\end{thm}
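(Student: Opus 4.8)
The plan is to reduce the statement to two facts: (i) the functional $L_h v := \sum_{i=1}^n w_i\big(v(h\bx_i)-v(\mathbf 0)\big)$ reproduces $\Delta$ \emph{exactly} on every polynomial of degree at most $k$, i.e. $L_h q=\Delta q(\mathbf 0)$; and (ii) the active weights obey $\sum_{i=1}^n|w_i|=O(h^{-2})$. Granting these, I would split $u=T_k u+R$ with $T_k u$ the degree-$k$ Taylor polynomial of $u$ about the origin and $R(\bx)=O(\|\bx\|^{k+1})$ its remainder (available since $u\in C^{k+1}$). By (i), $L_h(T_k u)=\Delta(T_k u)(\mathbf 0)=\Delta u(\mathbf 0)$, the last equality holding because $\Delta(\x^\alpha)\vert_{\mathbf 0}$ vanishes unless $\alpha\in\{(2,0),(0,2)\}$, so only the two second-order Taylor coefficients of $u$ contribute. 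Hence $L_h u-\Delta u(\mathbf 0)=L_h R$, and since the base points $\bx_i$ are fixed and only rescaled by $h$, we have $R(h\bx_i)=O(h^{k+1})$, so $|L_h R|\le\big(\sum_i|w_i|\big)\max_i|R(h\bx_i)|=O(h^{-2})\cdot O(h^{k+1})=O(h^{k-1})$.

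Fact (i) follows directly from the defining system (\ref{eqn-laplacian}). Writing $M$ for the matrix and $\mathbf b$ for the right-hand side there, and assuming $M$ has full row rank (the standard unisolvency condition), $\mathbf w=M^\dagger\mathbf b$ satisfies $M\mathbf w=\mathbf b$ exactly; reading off its bottom block yields the moment identities $\sum_{i=1}^n w_i P_j(h\bx_i)=\Delta P_j\vert_{\mathbf 0}$ for every $j=1,\dots,l$. Expanding any polynomial $q=\sum_j c_j P_j$ of degree $\le k$ (the constant term is annihilated by $L_h$) and applying these identities term by term gives $L_h q=\sum_j c_j\,\Delta P_j\vert_{\mathbf 0}=\Delta q(\mathbf 0)$. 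This is exactly where including the full polynomial basis up to order $k$ is indispensable.

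The crux is Fact (ii), since $M$ is genuinely ill-conditioned as $h\to 0$ (its smallest singular value degenerates), so no crude estimate of the form $\|\mathbf w\|\le\|\mathbf b\|/\sigma_{\min}(M)$ suffices. My plan is to separate the scales explicitly. Using $\hat r_d=O(h)$ and the invariant $ch^2=\gamma$, one checks that $\boldsymbol\Psi$ is \emph{$h$-independent}, that $\mathbf P^T=D\,\tilde{\mathbf P}^T$ and $\hat{\mathbf P}=\tilde{\hat{\mathbf P}}\,D$ with $D=\operatorname{diag}(h^{\deg P_j})$ and $h$-independent factors $\tilde{\mathbf P},\tilde{\hat{\mathbf P}}$, and that the right-hand side splits as $\Delta\boldsymbol\psi\vert_{\mathbf 0}=h^{-2}\tilde{\mathbf a}$, $\Delta\mathbf P\vert_{\mathbf 0}=\tilde{\mathbf p}$ (nonzero only in the two degree-two slots), with $\tilde{\mathbf a},\tilde{\mathbf p}$ independent of $h$. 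Writing $\mathbf w=M^T\boldsymbol\alpha$ and solving the normal equations $MM^T\boldsymbol\alpha=\mathbf b$ by block elimination, the decisive observation is that in the Schur complement for the RBF block every surviving power of $D$ cancels: the reduced system is $C_h\boldsymbol\alpha_1=h^{-2}\big(\tilde{\mathbf a}-\boldsymbol\Psi^T\tilde{\mathbf P}(\tilde{\mathbf P}^T\tilde{\mathbf P})^{-1}\tilde{\mathbf p}\big)$ with $C_h\to\boldsymbol\Psi^T\big(I-\tilde{\mathbf P}(\tilde{\mathbf P}^T\tilde{\mathbf P})^{-1}\tilde{\mathbf P}^T\big)\boldsymbol\Psi$, a fixed symmetric matrix that is positive definite precisely under the non-degeneracy assumption that the projections of the RBF columns off the polynomial space are linearly independent. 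Thus $\boldsymbol\alpha_1=O(h^{-2})$, and tracking the identical cancellations through $\mathbf w_A=\boldsymbol\Psi\boldsymbol\alpha_1+\tilde{\mathbf P}D\boldsymbol\alpha_2$ shows $D\boldsymbol\alpha_2=O(h^{-2})$ as well, whence $\mathbf w_A=O(h^{-2})$, which is (ii). I expect the careful bookkeeping of these cancellations, together with a precise statement of the minimal non-degeneracy hypotheses (polynomial unisolvency of the $h\bx_i$ and full rank of the projected $\boldsymbol\Psi$) guaranteeing invertibility of $\tilde{\mathbf P}^T\tilde{\mathbf P}$ and of the limiting Schur complement, to be the most delicate part of the argument.
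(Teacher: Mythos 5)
Your proposal is correct, and its skeleton coincides with the paper's own proof: your Fact (i) is exactly Lemma~\ref{Lemma A} (full rank of the transposed, underdetermined system forces the moment conditions $\textbf{P}^T\mathbf{w}_A=\left.\Delta \textbf{P}\right\vert_{\bx=\mathbf{0}}$), your Fact (ii) is exactly Lemma~\ref{Lemma B} ($\|\mathbf{w}\|=O(h^{-2})$ when $ch^2$ is held constant), and your splitting $u=T_ku+R$ reproduces the paper's Taylor-expansion-plus-remainder estimate, with the moment conditions cancelling everything below order $k+1$ except $u_{xx}+u_{yy}$ and the remainder bounded by $O(h^{-2})\cdot O(h^{k+1})=O(h^{k-1})$. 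The genuine divergence is in how Fact (ii) is proved, and there your route is not only different but materially sounder. The paper bounds the pseudoinverse norm via the expansion $(A+B)^\dagger=A^\dagger-A^\dagger BA^\dagger+O(B^2)$, taking $A$ to be the $h$-independent $\boldsymbol\Psi$-block and $B$ the $O(h)$ polynomial blocks, and concludes $\bigl\|(A+B)^\dagger\bigr\|=\|A^\dagger\|+O(h)=O(1)$. That expansion is only valid for rank-preserving perturbations; here the rank jumps from $d$ to $d+l$, and by Weyl's inequality $\sigma_{\min}(A+B)\le\|B\|=O(h)$, so in fact $\bigl\|(A+B)^\dagger\bigr\|\ge\|B\|^{-1}\to\infty$. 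This is precisely the degeneracy you flag when you note that no bound of the form $\|\mathbf{b}\|/\sigma_{\min}(M)$ can work. Your remedy --- writing the minimum-norm solution as $\mathbf{w}=M^T\boldsymbol\alpha$, eliminating through the normal equations with the explicit scaling $D=\operatorname{diag}(h^{\deg P_j})$, and checking that the powers of $h$ cancel so the Schur complement tends to the fixed matrix $\boldsymbol\Psi^T\bigl(I-\tilde{\textbf{P}}(\tilde{\textbf{P}}^T\tilde{\textbf{P}})^{-1}\tilde{\textbf{P}}^T\bigr)\boldsymbol\Psi$ --- exploits the special structure of the right-hand side (in particular that $D^{-1}\tilde{\mathbf{p}}=h^{-2}\tilde{\mathbf{p}}$, since $\tilde{\mathbf{p}}$ is supported on the degree-two slots) rather than a generic norm bound, and it is what actually makes the $O(h^{-2})$ claim rigorous. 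It also buys clean, $h$-independent hypotheses (polynomial unisolvency of the $\bx_i$ and full column rank of $\boldsymbol\Psi$ projected off the polynomial space) in place of the paper's fixed-$h$ full-rank assumptions, which by themselves do not give uniform control as $h\to0$. The price is the block-elimination bookkeeping you anticipate; the paper's version of this step is shorter but, as written, has a gap exactly where you placed the crux.
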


The rest of this section is to prove Theorem \ref{Thm:Consistency}. To begin with, we need the following two lemmas.

\begin{lem}
Denote the first $n$ entries and the remaining ${ l}$ elements of $\mathbf{w}$ in (\ref{eqn-laplacian}) by $\mathbf{w}_A$ and $\mathbf{w}_B$, respectively, so that
$$
\begin{pmatrix}
\mathbf{w}_A\\
\mathbf{w}_B
\end{pmatrix}
=\begin{pmatrix}
\boldsymbol \Psi^T & \hat{\textbf{P}}\\
\textbf{P}^T & \mathbf{0}
\end{pmatrix}^{\dagger}
\begin{pmatrix}
\Delta\boldsymbol \psi\vert_{\bx=\mathbf{0}}\\
\Delta \textbf{P}\vert_{\bx=\mathbf{0}}\\
\end{pmatrix} \, .
$$
If the matrix 
$\begin{pmatrix}
\boldsymbol \Psi & \textbf{P}\\
\hat{\textbf{P}}^T & \mathbf{0}
\end{pmatrix}$ 
is of full rank given by $d+{ l}$, the weights $\mathbf{w}_A$ satisfies $\textbf{P}^T\mathbf{w}_A=\left.\Delta \textbf{P}\right\vert_{\bx=\mathbf{0}}$. 
\label{Lemma A}
\end{lem}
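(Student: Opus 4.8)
The plan is to exploit a standard property of the Moore--Penrose pseudoinverse for matrices of full row rank. Write
$$
M = \begin{pmatrix} \boldsymbol\Psi & \textbf{P} \\ \hat{\textbf{P}}^T & \mathbf{0} \end{pmatrix},
$$
so that the matrix appearing in the definition of $\mathbf{w}$ is precisely $M^T$, of size $(d+l)\times(n+l)$, and
$$
\begin{pmatrix} \mathbf{w}_A \\ \mathbf{w}_B \end{pmatrix} = (M^T)^\dagger\, \mathbf{b}, \qquad \mathbf{b} = \begin{pmatrix} \Delta\boldsymbol\psi\vert_{\bx=\mathbf{0}} \\ \Delta\textbf{P}\vert_{\bx=\mathbf{0}} \end{pmatrix}.
$$
The hypothesis states that $M$ has full column rank $d+l$; equivalently, $M^T$ has full row rank $d+l$. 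Since $d+l \le n+l$, the rows of $M^T$ are linearly independent, the Gram matrix $M^T M$ is invertible, and the pseudoinverse admits the closed form $(M^T)^\dagger = M\,(M^T M)^{-1}$.

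First I would record the key consequence of this closed form, namely the left-identity relation $M^T (M^T)^\dagger = (M^T M)(M^T M)^{-1} = I_{d+l}$. Applying this to the definition of $\mathbf{w}$ immediately yields
$$
M^T \begin{pmatrix} \mathbf{w}_A \\ \mathbf{w}_B \end{pmatrix} = M^T (M^T)^\dagger\, \mathbf{b} = \mathbf{b}.
$$
Writing this block equation out explicitly, the top block row reproduces $\boldsymbol\Psi^T \mathbf{w}_A + \hat{\textbf{P}}\, \mathbf{w}_B = \Delta\boldsymbol\psi\vert_{\bx=\mathbf{0}}$, while the bottom block row gives $\textbf{P}^T \mathbf{w}_A + \mathbf{0}\cdot\mathbf{w}_B = \Delta\textbf{P}\vert_{\bx=\mathbf{0}}$. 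The second of these is exactly the claimed identity $\textbf{P}^T \mathbf{w}_A = \Delta\textbf{P}\vert_{\bx=\mathbf{0}}$.

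There is essentially no analytic obstacle here: the result is a direct algebraic consequence of the full-rank hypothesis, and the only point requiring care is to confirm that it is $M$ (and hence $M^T$) that enjoys full rank, so that the pseudoinverse acts as a genuine left inverse when multiplied by $M^T$ on the left. I would therefore state explicitly that full column rank of $M$ is what guarantees invertibility of $M^T M$, since a miscount of which dimension is ``tall'' would invalidate the closed form and the left-identity step. The interpretation worth flagging is that this lemma shows $\mathbf{w}_A$ reproduces the exact Laplacian at the origin of every polynomial in the basis $P_1,\dots,P_l$ --- a polynomial-exactness property that will drive the order estimate in the proof of Theorem \ref{Thm:Consistency}.
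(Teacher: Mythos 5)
Your proof is correct and follows essentially the same route as the paper: both arguments use the full-rank hypothesis to conclude that the pseudoinverse solution satisfies the underdetermined system $M^T\mathbf{w}=\mathbf{b}$ exactly, and then read off the bottom block row $\textbf{P}^T\mathbf{w}_A=\Delta\textbf{P}\vert_{\bx=\mathbf{0}}$. The only difference is that you justify the left-identity $M^T(M^T)^\dagger = I$ explicitly via the closed form $(M^T)^\dagger = M(M^TM)^{-1}$, whereas the paper simply asserts that the pseudoinverse yields a particular solution.
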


\begin{proof}
Since $\begin{pmatrix} \boldsymbol \Psi & \textbf{P}\\ \hat{\textbf{P}}^T & \mathbf{0} \end{pmatrix}$ is full rank, the transpose $\begin{pmatrix}
\boldsymbol \Psi^T & \hat{\textbf{P}}\\ \textbf{P}^T & \mathbf{0} \end{pmatrix}$ is also full rank. This implies that the solution
$
\begin{pmatrix}
\mathbf{w}_A\\
\mathbf{w}_B
\end{pmatrix}
$
must be a particular solution to the \textit{underdetermined} system given by
\[\begin{pmatrix}
\boldsymbol \Psi^T & \hat{\textbf{P}}\\
\textbf{P}^T & \mathbf{0}
\end{pmatrix}
\begin{pmatrix}
\mathbf{w}_A\\
\mathbf{w}_B
\end{pmatrix}
=
\begin{pmatrix}
\Delta\boldsymbol \psi\vert_{\bx=\mathbf{0}}\\
\Delta \textbf{P}\vert_{\bx=\mathbf{0}}\\
\end{pmatrix}
\]
The second row of the submatrix implies $\textbf{P}^T\mathbf{w}_A=\left.\Delta \textbf{P}\right\vert_{\bx=\mathbf{0}}$. This completes the proof.
\end{proof}

\begin{lem}
Assume the RBF function has the form $\phi(r)=f(cr^2)$ with $f \in C^2[0,\infty)$ where $c$ is the shape parameter. If 
both matrices $\boldsymbol\Psi$ and $\displaystyle \begin{pmatrix}
\boldsymbol \Psi & \textbf{P}\\
\hat{\textbf{P}}^T & \mathbf{0}
\end{pmatrix}$ are full rank, we have
$\left\|\begin{pmatrix}
\mathbf{w}_{A}\\
\mathbf{w}_{B}
\end{pmatrix}\right\|=O\left(h^{-2}\right)$ as $h$ approaches to 0 {  for any vector norm while keeping $ch^2$ as constant.}
\label{Lemma B}
\end{lem}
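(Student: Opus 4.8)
The plan is to work in the Euclidean norm (all norms being equivalent in finite dimensions) and to exploit the fact that, since $M:=\begin{pmatrix}\boldsymbol\Psi^T & \hat{\textbf P}\\ \textbf P^T & \mathbf 0\end{pmatrix}$ is assumed to be of full row rank, the pseudoinverse gives the minimum-norm solution $M^{\dagger}\mathbf b=M^T(MM^T)^{-1}\mathbf b$, where $\mathbf b=\begin{pmatrix}\Delta\boldsymbol\psi|_{\bx=\mathbf 0}\\ \Delta\textbf P|_{\bx=\mathbf 0}\end{pmatrix}$. Writing $\boldsymbol\eta=(MM^T)^{-1}\mathbf b$ with blocks $\boldsymbol\eta_1\in\mathbb R^{d}$, $\boldsymbol\eta_2\in\mathbb R^{l}$ and using $M^T=\begin{pmatrix}\boldsymbol\Psi & \textbf P\\ \hat{\textbf P}^T & \mathbf 0\end{pmatrix}$, I first record the explicit representation
\begin{equation*}
\mathbf w_A=\boldsymbol\Psi\,\boldsymbol\eta_1+\textbf P\,\boldsymbol\eta_2,\qquad \mathbf w_B=\hat{\textbf P}^T\boldsymbol\eta_1.
\end{equation*}
The point already visible here is that $\mathbf w_B$ is generated only through $\hat{\textbf P}^T$, whose entries $P_j(\hat{\bx}_i)$ are $O(h^{\deg P_j})$ with $\deg P_j\ge 1$; hence $\mathbf w_B$ is automatically at least one power of $h$ smaller than $\boldsymbol\eta_1$, and the whole problem reduces to bounding $\boldsymbol\eta_1$ and $\boldsymbol\eta_2$.

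Next I would estimate $\mathbf b$. Differentiating $\psi_i(\bx)=f(c\|\bx-\hat{\bx}_i\|^2)-f(c\|\hat{\bx}_i\|^2)$ twice gives, in two dimensions,
\begin{equation*}
\Delta\psi_i|_{\bx=\mathbf 0}=4c^2\|\hat{\bx}_i\|^2 f''(c\|\hat{\bx}_i\|^2)+4c\,f'(c\|\hat{\bx}_i\|^2).
\end{equation*}
Since $\|\hat{\bx}_i\|=\hat r_d=O(h)$ and $ch^2$ is held constant, the arguments $c\|\hat{\bx}_i\|^2=O(ch^2)=O(1)$ stay bounded, so $f'$ and $f''$ are $O(1)$; as $c=O(h^{-2})$ and $c^2\|\hat{\bx}_i\|^2=c\cdot O(1)=O(h^{-2})$, every entry of $\Delta\boldsymbol\psi|_{\bx=\mathbf 0}$ is $O(h^{-2})$. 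The polynomial entries $\Delta P_j|_{\bx=\mathbf 0}$ are fixed numbers ($2$ for $P_j\in\{x^2,y^2\}$ and $0$ otherwise), hence $O(1)$. Thus $\|\mathbf b\|=O(h^{-2})$.

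The difficulty is that $MM^T$ is badly scaled, its smallest singular value degenerating like $h^{k}$, so the crude estimate $\|\mathbf w\|\le\|\mathbf b\|/\sigma_{\min}(M)$ is far too lossy. To exploit the structure I would diagonally rescale the polynomial block. By homogeneity, $P_j(h\bx_i)=h^{\deg P_j}P_j(\bx_i)$ and $P_j(\hat{\bx}_i)=h^{\deg P_j}P_j(\hat{\bx}_i/h)$, so with $D=\mathrm{diag}(h^{\deg P_1},\dots,h^{\deg P_l})$ I write $\textbf P=\tilde{\textbf P}D$ and $\hat{\textbf P}=\hat{\textbf Q}D$, where $\tilde{\textbf P}$ and $\hat{\textbf Q}$ have $O(1)$ entries and converge, as $h\to0$ with $ch^2$ fixed, to $h$-independent matrices. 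Substituting these into $MM^T\boldsymbol\eta=\mathbf b$ and setting $\boldsymbol\zeta=D\boldsymbol\eta_2$, the normal equations become $K\begin{pmatrix}\boldsymbol\eta_1\\ \boldsymbol\zeta\end{pmatrix}=\begin{pmatrix}\Delta\boldsymbol\psi|_{\bx=\mathbf 0}\\ D^{-1}\Delta\textbf P|_{\bx=\mathbf 0}\end{pmatrix}$, with
\begin{equation*}
K=\begin{pmatrix}\boldsymbol\Psi^T\boldsymbol\Psi+\hat{\textbf Q}D^2\hat{\textbf Q}^T & \boldsymbol\Psi^T\tilde{\textbf P}\\ \tilde{\textbf P}^T\boldsymbol\Psi & \tilde{\textbf P}^T\tilde{\textbf P}\end{pmatrix}=\begin{pmatrix}\boldsymbol\Psi & \tilde{\textbf P}\end{pmatrix}^T\begin{pmatrix}\boldsymbol\Psi & \tilde{\textbf P}\end{pmatrix}+O(h^2).
\end{equation*}
The new right-hand side is still $O(h^{-2})$: the top block was estimated above, while $D^{-1}\Delta\textbf P|_{\bx=\mathbf 0}$ has nonzero entries only for $\deg P_j=2$, equal to $2h^{-2}$.

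Finally I would conclude by showing $\|K^{-1}\|=O(1)$, after which $\boldsymbol\eta_1=O(h^{-2})$ and $\boldsymbol\zeta=O(h^{-2})$, and back-substitution yields $\mathbf w_A=\boldsymbol\Psi\boldsymbol\eta_1+\tilde{\textbf P}\boldsymbol\zeta=O(h^{-2})$ together with $\mathbf w_B=\hat{\textbf P}^T\boldsymbol\eta_1=D\hat{\textbf Q}^T\boldsymbol\eta_1=O(h^{-1})$, so that $\|(\mathbf w_A,\mathbf w_B)\|=O(h^{-2})$. I expect the uniform conditioning of $K$ to be the main obstacle: since $ch^2$ is fixed, $\boldsymbol\Psi$, $\tilde{\textbf P}$ and $\hat{\textbf Q}$ converge to fixed configurations and $K$ tends to the Gram matrix of the limiting columns $\begin{pmatrix}\boldsymbol\Psi & \tilde{\textbf P}\end{pmatrix}$, which is nonsingular exactly when those $d+l$ columns remain linearly independent. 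The delicate part is to argue that $K$ stays \emph{bounded away} from singular as $h\to0$, rather than merely being invertible at each fixed $h$, and to tie this uniform non-degeneracy cleanly to the hypotheses that $\boldsymbol\Psi$ and $M$ are of full rank; this is where the rescaling pays off, because it isolates exactly the $h$-independent geometric condition that makes the estimate work.
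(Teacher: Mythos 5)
Your proposal takes a genuinely different route from the paper's, and the parts you carry out are correct: the block representation $\mathbf w_A=\boldsymbol\Psi\boldsymbol\eta_1+\textbf P\boldsymbol\eta_2$, $\mathbf w_B=\hat{\textbf P}^T\boldsymbol\eta_1$, the computation $\Delta\psi_i\vert_{\bx=\mathbf 0}=4c^2\|\hat\bx_i\|^2f''(c\|\hat\bx_i\|^2)+4cf'(c\|\hat\bx_i\|^2)$, and the bound $\|\mathbf b\|=O(h^{-2})$ all agree with the paper's treatment of the right-hand side. The paper, however, finishes by asserting $\bigl\|\bigl(\begin{smallmatrix}\boldsymbol\Psi & \textbf P\\ \hat{\textbf P}^T & \mathbf 0\end{smallmatrix}\bigr)^\dagger\bigr\|=O(1)$ outright, using the expansion $(A+B)^\dagger=A^\dagger-A^\dagger BA^\dagger+O(B^2)$ with $A=\mathrm{diag}(\boldsymbol\Psi,\mathbf 0)$ and $B$ the polynomial blocks, and then multiplying the two bounds; you instead rescale the polynomial columns by $D=\mathrm{diag}(h^{\deg P_j})$ and reduce everything to $\|K^{-1}\|=O(1)$ for the rescaled Gram matrix $K$. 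Your distrust of the unrescaled bound is well-founded: since $B$ raises the rank from $d$ to $d+l$, Weyl's inequality gives a nonzero singular value of $A+B$ of size at most $\|B\|=O(h)$, hence $\|(A+B)^\dagger\|\ge\|B\|^{-1}\to\infty$; the paper's expansion is valid only for rank-preserving perturbations, so its $O(1)$ claim is not justified as written. Your rescaling, which also rescales the right-hand side and exploits that $\Delta\textbf P\vert_{\bx=\mathbf 0}$ is supported on the quadratic monomials so that $D^{-1}\Delta\textbf P\vert_{\bx=\mathbf 0}$ is still $O(h^{-2})$, is exactly the mechanism that makes the estimate salvageable.

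The gap is the one you flag yourself, and it is genuine: $\|K^{-1}\|=O(1)$ is not proved, and it cannot be ``tied cleanly to the hypotheses'' as you hope, because it does not follow from them. Under the paper's scaling ($ch^2$ fixed, ghost radius proportional to $h$), $\boldsymbol\Psi$, $\tilde{\textbf P}$ and $\hat{\textbf Q}$ are $h$-independent, and $\|K^{-1}\|=O(1)$ is equivalent to the $h$-independent matrix $(\boldsymbol\Psi\ \ \tilde{\textbf P})$ having full column rank $d+l$. The stated hypotheses do not imply this: if there exists $(\boldsymbol\alpha,\boldsymbol\gamma)\neq\mathbf 0$ with $\boldsymbol\Psi\boldsymbol\alpha+\tilde{\textbf P}\boldsymbol\gamma=\mathbf 0$ but $\hat{\textbf Q}^T\boldsymbol\alpha\neq\mathbf 0$, then $\boldsymbol\Psi$ and the full system matrix remain full rank for every $h>0$, so the lemma's hypotheses hold; yet pairing $M\mathbf w=\mathbf b$ against $(\boldsymbol\alpha,\boldsymbol\gamma)$ shows that \emph{every} solution satisfies $(D\hat{\textbf Q}^T\boldsymbol\alpha)^T\mathbf w_B=\boldsymbol\alpha^T\mathbf b_1+\boldsymbol\gamma^T D^{-1}\mathbf b_2$ (with $\mathbf b_1=\Delta\boldsymbol\psi\vert_{\bx=\mathbf 0}$, $\mathbf b_2=\Delta\textbf P\vert_{\bx=\mathbf 0}$), and since the right side is generically of order $h^{-2}$ while $\|D\hat{\textbf Q}^T\boldsymbol\alpha\|=O(h)$, one gets $\|\mathbf w_B\|\gtrsim h^{-3}$: the conclusion itself fails in this degenerate configuration. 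So the missing step is not a technicality; a correct proof must assume full column rank of $(\boldsymbol\Psi\ \ \tilde{\textbf P})$ (equivalently, non-degeneracy of the system uniformly as $h\to0$), which is strictly stronger than the lemma's stated hypotheses — and the paper's own proof founders on precisely the same point, only less visibly. Once that assumption is granted, your argument closes, and can even be shortened: since $\mathbf w$ is the minimum-norm solution, it suffices to exhibit \emph{one} solution of size $O(h^{-2})$, e.g.\ $\mathbf w_B=\mathbf 0$ and $\mathbf w_A=\bigl(\begin{smallmatrix}\boldsymbol\Psi^T\\ \tilde{\textbf P}^T\end{smallmatrix}\bigr)^\dagger\bigl(\begin{smallmatrix}\mathbf b_1\\ D^{-1}\mathbf b_2\end{smallmatrix}\bigr)$, whose norm is $O(h^{-2})$ because this pseudoinverse is $h$-independent; that bypasses the $O(h^2)$ perturbation analysis of $K$ entirely.
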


We found that most of the commonly used RBFs are in the form of $f(cr^2)$ with a twice differentiable $f$. These include the IQ, the MQ, the GA, and the IMQ.

\begin{proof}
We first rewrite the definition of $\mathbf{w}$ and obtain
\begin{equation*}
\begin{pmatrix}
\mathbf{w}_{A}\\
\mathbf{w}_{B}
\end{pmatrix}=\left(
\begin{pmatrix}
\boldsymbol\Psi & \textbf{P}\\
\hat{\textbf{P}}^T & \mathbf{0}
\end{pmatrix}^\dagger\right)^T
\begin{pmatrix}
\left.\Delta \boldsymbol\psi (\bx)\right\vert_{\bx=\mathbf{0}}\\
\left.\Delta \textbf{P} (\bx)\right\vert_{\bx=\mathbf{0}}\\
\end{pmatrix} \, .
\end{equation*}

Therefore, it is sufficient to show the following two conditions:
$
\left\|
\begin{pmatrix}
\boldsymbol\Psi & \textbf{P}\\
\hat{\textbf{P}}^T & \mathbf{0}
\end{pmatrix}^\dagger \right\|
=O(1)$ and $
\left\|
\begin{pmatrix}
\left.\Delta \boldsymbol\psi (\bx)\right\vert_{\bx=\mathbf{0}}\\
\left.\Delta \textbf{P} (\bx)\right\vert_{\bx=\mathbf{0}}\\
\end{pmatrix}\right\|=O(h^{-2})$. For the first condition, we apply the following expansion of the pseudoinverse
$(A+B)^\dagger= A^\dagger - A^\dagger B A^\dagger +O(B^2)$ and take 
$A=\begin{pmatrix}
\Psi & \mathbf{0}\\
\mathbf{0} & \mathbf{0}\\
\end{pmatrix}$ and $B=
\begin{pmatrix}
\mathbf{0} & \textbf{P}\\
\hat{\textbf{P}}^T & \mathbf{0}
\end{pmatrix}$. If the shape parameter $c$ satisfies $ch^2=\text{constant}$ as $h$ goes to 0 (i.e. $c=O(h^{-2})$), we found that $A$ will be independent of $h$ and 
the remaining part $\|B\|$ (without the zeroth order polynomial) is of order $h$. This implies
\[
\left\|\begin{pmatrix}
\boldsymbol \Psi & \textbf{P}\\
\hat{\textbf{P}}^T & \mathbf{0}
\end{pmatrix}^\dagger\right\|=
\left\|
\begin{pmatrix}
\boldsymbol \Psi & \mathbf{0}\\
\mathbf{0} & \mathbf{0}
\end{pmatrix}^\dagger\right\|
+
O(h) \, .
\] 
For the second condition, we first examine $\Delta \boldsymbol\psi(\mathbf{0})$. For each component, we have
\begin{eqnarray*}
\Delta \psi_i(\mathbf{0}) &=& \left.\Delta \phi_i\right\vert_{\bx=\mathbf{0}}=\nabla \cdot \nabla \phi_i(r(x))\vert_{\bx=\mathbf{0}}=\nabla \cdot( \phi_i'(r) \nabla r)\vert_{\bx=\mathbf{0}} \\
&=& \left.\phi_i''(r) \nabla r \cdot\nabla r+\phi_i'(r)\Delta r\right\vert_{\bx=\mathbf{0}} \, .
\end{eqnarray*}

{ 
Now, since $\nabla r \cdot\nabla r =1$ and $\Delta r =(D-1)r^{-1}$ where $D$ is the dimension, we obtain}
$$
\Delta \psi_i(\mathbf{0}) =\left.\phi_i''(r)+ \frac{\phi_i'(r)}{r}\right\vert_{r=\hat{r}_i} = \phi_i''(\hat{r}_i)+ \frac{{ (D-1)}\phi_i'(\hat{r}_i)}{\hat{r}_i}={  \frac{\hat{r}^2_i \phi_i''+(D-1)\hat{r}_i \phi_i'}{\hat{r}^2_i} }\, .
$$
Now, the condition $\phi(r)=f(cr^2)$ with $f\in C^2[0,\infty)$ implies
$r\phi'(r)=2cr^2 f'(cr^2)$ and $r^2\phi''(r)=2cr^2f'(cr^2)+4c^2r^4f''(cr^2)$. These are all functions in the form of $cr^2$. Therefore, if we have a constant $c\hat{r}^2_i$, the quantity $\Delta \psi_i(\mathbf{0})$ can be expressed as a constant divided by $\hat{r}_i^2$. As all the ghost sample points shrink to the origin with speed $h$, the average radius $\hat{r}_i$ is a constant multiple of $h$. This implies that the condition $c\hat{r}^2_i$ being a constant is the same as keeping $ch^2$ constant. Therefore, the quantity $\Delta \psi_i(\mathbf{0})$ can also be expressed as a constant divided by $h^{2}$, and $\vert\Delta \boldsymbol\psi (\mathbf{0})\vert=O(h^{-2})$ when $h$ approaches zero with a constant $ch^2$.

For the term $\left.\Delta \textbf{P} (\bx)\right\vert_{\bx=\mathbf{0}}$, we notice that out of all possible polynomials in the basis, only two polynomials in the basis (given by $x^2$ and $y^2$) whose Laplacian is nonzero. In particular,
$$
\left.\Delta P_j(\bx) \right\vert_{\bx=\mathbf{0}}=\left\{\begin{array}{ll}
2 & \text{if } P_j(\bx)=x_1^2 \text{ or } P_j(\bx)=x_2^2 \text{ or }\ldots \text{ or }  P_j(\bx)=x_D^2\\
0 & \mbox{otherwise.}
\end{array}\right. 
$$
For example, { in the two-dimensional space}, if we take the basis to be $\{x,y,x^2,xy,y^2\}$ respectively, then
\begin{equation*}
\left.\Delta \textbf{P} (\bx)\right\vert_{\bx=\mathbf{0}}=\begin{pmatrix}
\left.\Delta P_1(\bx) \right\vert_{\bx=\mathbf{0}}\\
\left.\Delta P_2(\bx) \right\vert_{\bx=\mathbf{0}}\\
\left.\Delta P_3(\bx) \right\vert_{\bx=\mathbf{0}}\\
\left.\Delta P_4(\bx) \right\vert_{\bx=\mathbf{0}}\\
\left.\Delta P_5(\bx) \right\vert_{\bx=\mathbf{0}}\\
\end{pmatrix}
=\begin{pmatrix}
\left.\Delta x \right\vert_{\bx=\mathbf{0}}\\
\left.\Delta y \right\vert_{\bx=\mathbf{0}}\\
\left.\Delta x^2 \right\vert_{\bx=\mathbf{0}}\\
\left.\Delta xy \right\vert_{\bx=\mathbf{0}}\\
\left.\Delta y^2 \right\vert_{\bx=\mathbf{0}}\\
\end{pmatrix}
=\begin{pmatrix}
0\\
0\\
2\\
0\\
2\\
\end{pmatrix}
\end{equation*}
Combining these two results, we have
\[\left\|
\begin{pmatrix}
\left.\Delta \boldsymbol\psi (\bx)\right\vert_{\bx=\mathbf{0}}\\
\left.\Delta \textbf{P} (\bx)\right\vert_{\bx=\mathbf{0}}\\
\end{pmatrix}\right\|=O(h^{-2}) \, .\]
\end{proof}

Now, we are ready to present the proof of Theorem 1.

\begin{proof}[ {Theorem \ref{Thm:Consistency}}]
We write 
$\mathbf{w}^T_A \bu=w_1(u_1-u_0)+w_2(u_2-u_0)+{ \ldots}+w_n(u_n-u_0)$
where $u_i=u(\bx_i)$ for $i=1,{ \ldots},n$. We first expand all $u_i$ around the origin { up to the $(k+1)$-th order derivatives}. For example, if up to second order polynomial is included $(k=2)$ {  in the case of two dimensions $(D=2)$}, we have
\begin{eqnarray*}
\mathbf{w}_A^T\bu &=& (w_1x_1+\cdots+w_nx_n)u_x + (w_1y_1+\cdots+w_ny_n)u_y \\
&& + \left(w_1x_1y_1+\cdots+w_nx_ny_n \right)u_{xy}\\
&& + \frac{1}{2} \left(w_1x^2_1+\cdots+w_nx^2_n \right) u_{xx} + \frac{1}{2} \left(w_1y^2_1+\cdots+w_ny^2_n\right) u_{yy}\\
&& + \frac{1}{6} \left(w_1x^3_1+\cdots+w_nx^3_n \right) u_{xxx}+ \frac{1}{2} \left(w_1x^2_1y_1+\cdots+w_nx^2_ny_n \right)u_{xxy}\\
&& + \frac{1}{2}\left(w_1x_1y^2_1+\cdots+w_nx_ny^2_n\right) u_{xyy}+\frac{1}{6} \left(w_1y^3_1+\cdots+w_ny^3_n \right) u_{yyy}
\end{eqnarray*}
where the first and second order derivatives $u_x,u_y,u_{xx},u_{xy},u_{yy}$ are evaluated at the origin and the third order derivatives $u_{xxx},u_{xxy},u_{xyy},u_{yyy}$ are evaluated at some point  $\boldsymbol \xi$ inside the circle enclosing all the sample points. This expansion can be easily extended to higher dimensions. Note that the coefficient of all derivatives of $u$ (up to the $k$-th order derivatives) are simply given by components in $\textbf{P}^T\mathbf{w}_A$. Using Lemma \ref{Lemma A} and an intermediate result in Lemma \ref{Lemma B}, we have
$$
\textbf{P}^T\mathbf{w}_A=\left.\Delta \textbf{P}\right\vert_{\bx=\mathbf{0}} =\left\{\begin{array}{ll}
2 & \text{if } P_j(\bx)=x^2 \text{ or } P_j(\bx)=y^2\\
0 & \mbox{otherwise.}
\end{array}\right. 
$$
for $j=1,{ \ldots},l$ up to the highest degree in the polynomial basis. Therefore, if we include up to $k$-th order polynomial basis, then $\mathbf{w}_A^T\bu$ can be expressed as 
\[\mathbf{w}_A^T \bu= u_{xx}+u_{yy}+\sum_{s=0}^{k+1} \sum_{i=1}^n \frac{1}{(k+1)!} \begin{pmatrix}
k+1\\
s
\end{pmatrix} w_i x_i^s y_i^{k+1-s} \frac{\partial^{k+1} u}{\partial x^s\partial y^{k+1-s}} \, . \]
Hence, we have $\left\vert\mathbf{w}_A^T\bu-\Delta u(\mathbf{0})\right\vert$ bounded above by
$$ \frac{2^{k+1}n}{(k+1)!}
\left[ \max_{i=1,\cdots,n}{\vert w_i\vert} \right]
\left[\max_{i=1,\cdots,n}\max(\vert x_i\vert ,\vert y_i\vert)\right]^{k+1} 
{ \left[\max_{s=0,\cdots,k+1}\left\vert\frac{\partial^{k+1} u}{\partial x^s\partial y^{k+1-s}}\right\vert_{\bx=\mathbf{\boldsymbol \xi}}\right]} \, .
$$
{{In general, for higher dimensions, we apply Taylor's expansion for the multivariable function $u$ up the $(k+1)$-th order and cancel out all the lower order terms not in the form of $P_j(x)=x_p^2$. Finally, we obtain the error bounded for $\left\vert\mathbf{w}_A^T\bu-\Delta u(\mathbf{0})\right\vert$} given by
$$
\frac{nD^{k+1}}{(k+1)!}\left[ \max_{i=1,\cdots,n}{\vert w_i\vert} \right]\left[\max_
{\substack{i=1,\cdots, n\\p=1,\cdots, D}} (\vert(\textbf{x}_i)_P\vert)\right]^{k+1}
\left[\max_{\sum \alpha_p = k+1, \alpha_p\geq 0}\left\vert\frac{\partial^{k+1} u}{\partial x_1^{\alpha_1} \ldots \partial x_D^{\alpha_D}}\right\vert_{\bx=\mathbf{\boldsymbol \xi}}\right]
$$ with $(\textbf{x}_i)_p$ being the $p$-th component of the $i$-th sampling point $\textbf{x}_i$.}
Now, based on Lemma \ref{Lemma B}, we have $\max_{i=1,{ \ldots},n}{\vert w_i\vert}=O(h^{-2})$. Moreover, since sample points shrink to the center with speed $h$, we have 
{ 
$\big[\max_{i,p}(\vert(\textbf{x}_i)_P\vert)\big]^{k+1}=O(h^{k+1})$}.
Therefore, we obtain the estimate $\left\vert\mathbf{w}_A^T\bu-\left.\Delta u(\bx)\right\vert_{\bx=\mathbf{0}}\right\vert=O(h^{k-1})$.
\end{proof}

Note that this error bound depends explicitly on the number of data points used in the approximation (i.e., $n$). It might seem that more data would yield less accuracy. However, we are considering the behavior of the approximation for a fixed number of data points. The situation is more complicated when comparing the corresponding behavior for two sets with different numbers of data points. If more data points are collected in the reconstruction, one has to go further away from the center to search for more neighbors, making the approximation less local. Therefore, intuitively, it indeed yields less accuracy. However, if we can control the average radius of these data points from the center as we increase $n$, one may be able to derive a tighter bound for the summation $\sum_{i=1}^n {w_ix_i^s y_i^{k+1-s}}$ independent of $n$. It is, therefore, less conclusive whether the error bounds will actually increase as we increase $n$.

\section{Numerical Examples}
\label{NumEx}

In this section, we will design several numerical examples to demonstrate the effectiveness and performance of the proposed CLS-GSP method in various applications.

\begin{figure}[!ht]
\centering
\includegraphics[width=\textwidth ]{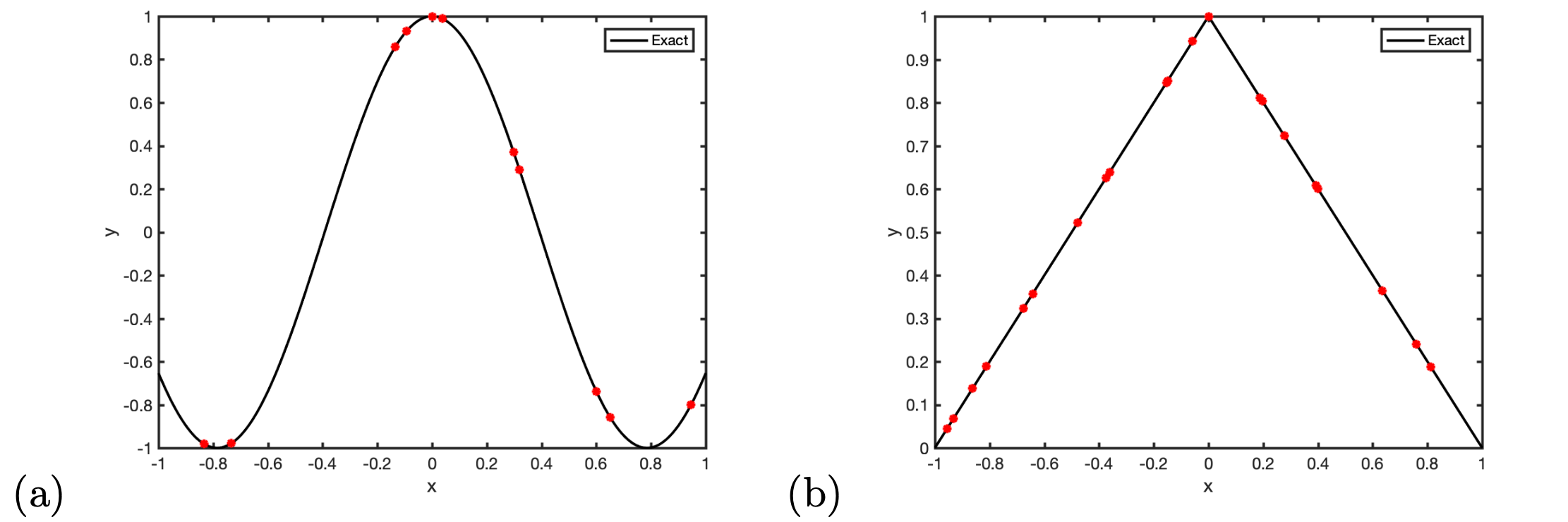}
\caption{(Example \ref{Ex:ErrorApproximation}) A random sampling of the function (a) $u(x) = \cos 4x$ and (b) $u(x) = 1 - \text{sgn}(x)x$. The red circles represent the random sample points chosen in the reconstruction.}
\label{Fig:SmoothErrorApproximation}
\end{figure}

%

\begin{figure}[!ht]
\centering
\includegraphics[width=\textwidth ]{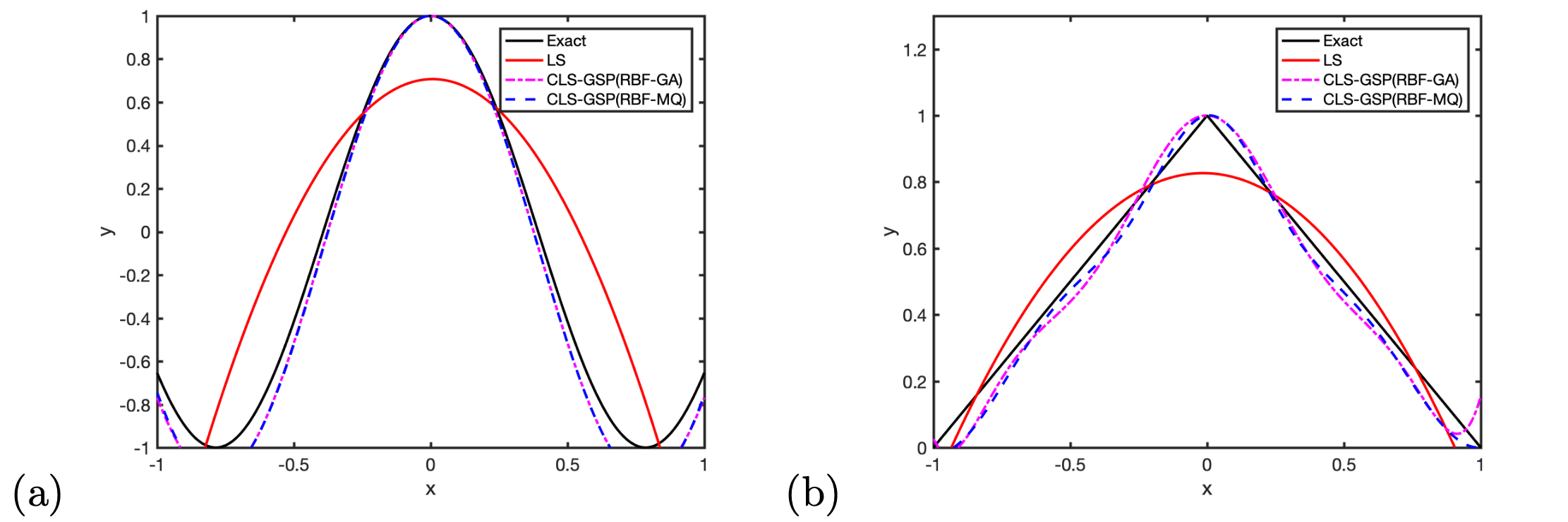}
\caption{(Example \ref{Ex:ErrorApproximation}) Reconstructions of (a) a smooth and (b) a nonsmooth function using our CLS-GSP method with different RBF kernels. The solid black lines are the exact function, and the solid red lines are obtained by the original least-squares reconstruction.}
\label{Fig.Smooth.Singular}
\end{figure}

\subsection{Local Reconstruction}
\label{Ex:ErrorApproximation}

This section demonstrates the performance of the local reconstruction in our proposed CLS-GSP method. We randomly generate sample points between $[-1,1]$ with the center at the origin. In the first example, we consider a smooth function $\cos 4x$, as shown in Figure \ref{Fig:SmoothErrorApproximation}(a), and also a non-smooth function $u(x) = 1 - \text{sgn}(x)x$ which has a kink at the origin, as shown in Figure \ref{Fig:SmoothErrorApproximation}(b). To emphasize the importance of the RBF kernels, we will compare our performance with the standard second-order polynomial least-squares polynomial approach, as shown in Figure \ref{Fig.Smooth.Singular}. The solution from the standard polynomial least-squares method is a little drastic since the method tries to balance the error from \textit{all} sampling locations. The result is especially unsatisfactory since some sample points are far from the center $x_0=0$. Nevertheless, our CLS-GSP method, using either the Gaussian or the Multiquadric basis, can recover the function well, especially in the area near the center $x_0 = 0$, even if the samples are not localized.

\begin{figure}[!ht]
\centering
\includegraphics[width=\textwidth ]{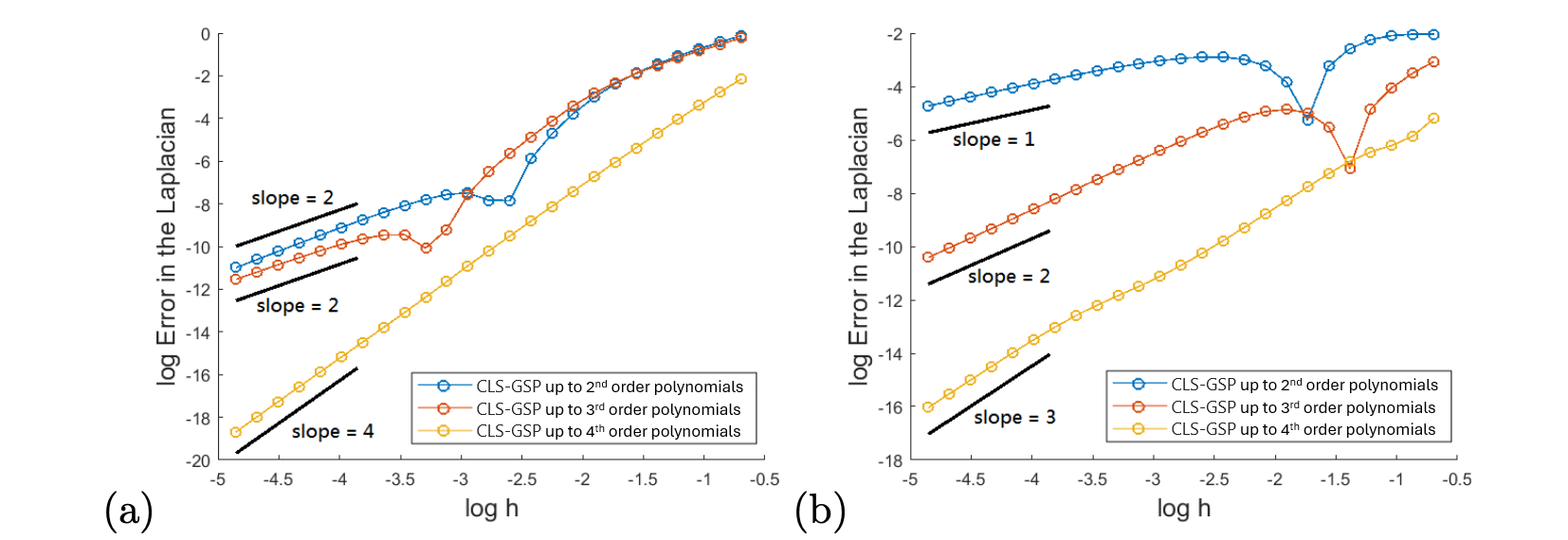}
\caption{{ (Section \ref{Ex:Laplacian_Approximation}) Error in approximating the Laplacian of (a) $u_1(x,y)=\exp(-x^2-y^2)$ and (b) $u_2(x,y)=3\cos x-4\sin y$ at the origin for our CLS-GSP method with the second, the third and the fourth order polynomial basis.}}
\label{Fig.Consistency}
\end{figure}

\subsection{Consistency in Approximating the Laplacian}
\label{Ex:Laplacian_Approximation}
In the second example, we demonstrate the consistency of our CLS-GSP method applied to the Laplace operator, $\Delta$, in $\mathbb{R}^2$. We examine the Laplacian of two functions: $u_1(x,y)=\exp(-x^2-y^2)$ and $u_2(x,y)=3\cos x-4 \sin y$, at $(x,y)=(0,0)$ where the exact Laplacian is given by $-4$ and $-3$, respectively. We generate $20$ random sample points around the center $(0,0)$ and take 8 ghost sample points uniformly on a circle centered at the origin given by $\left\{\left(r\cos \frac{k\pi}{4} ,r\sin \frac{k\pi}{4}\right)\right\}_{k=1}^8$, where $r$ is chosen to be the average distance of the sample points from the origin. We apply our CLS-GSP method using Gaussian kernels (GA) and polynomial basis up to the second, third, and fourth order. The approximation to the Laplacian is calculated using expression (\ref{eqn-laplacian}).

In the first test function $u_1(\mathbf{x})=\exp(-x^2-y^2)$, we refine the scaling parameter $h$ as defined in Section \ref{Def_Order_Consistency} while fixing $ch^2=0.1$ where $c$ is the shape parameter. The log-Error plot is shown in Figure \ref{Fig.Consistency}(a). According to Theorem \ref{Thm:Consistency}, we expect our computed Laplacian to have $(k-1)$-th order consistency when up to $k$-th order polynomial bases are included. In Figure \ref{Fig.Consistency}(a), when $h$ is small, the slopes of the log-Error plot are approximately $2$, $2$, and $4$ when $k=2,3$, and $4$, respectively. This indicates an extra consistent order when up to the second- or fourth-order polynomial bases are included. All the third-order and fifth-order partial derivatives of $u_1(x,y)$ at the origin are zero. This leads to the vanishing of the leading-order term. However, considering the second test function $u_2(\mathbf{x})=3\cos x-4 \sin y$, the consistent behavior is different from $u_1$. Figure \ref{Fig.Consistency}(b) shows the log-Error for different $h$, when $ch^2$ is fixed to be $1$. The slopes are approximately $1$, $2$, and $3$ when $k=2,3$, and $4$, respectively. This observation aligns with Theorem \ref{Thm:Consistency} as some high-order derivatives of $u_2(x,y)$ are non-zero.


\begin{figure}[!ht]
\centering
\includegraphics[width=\textwidth ]{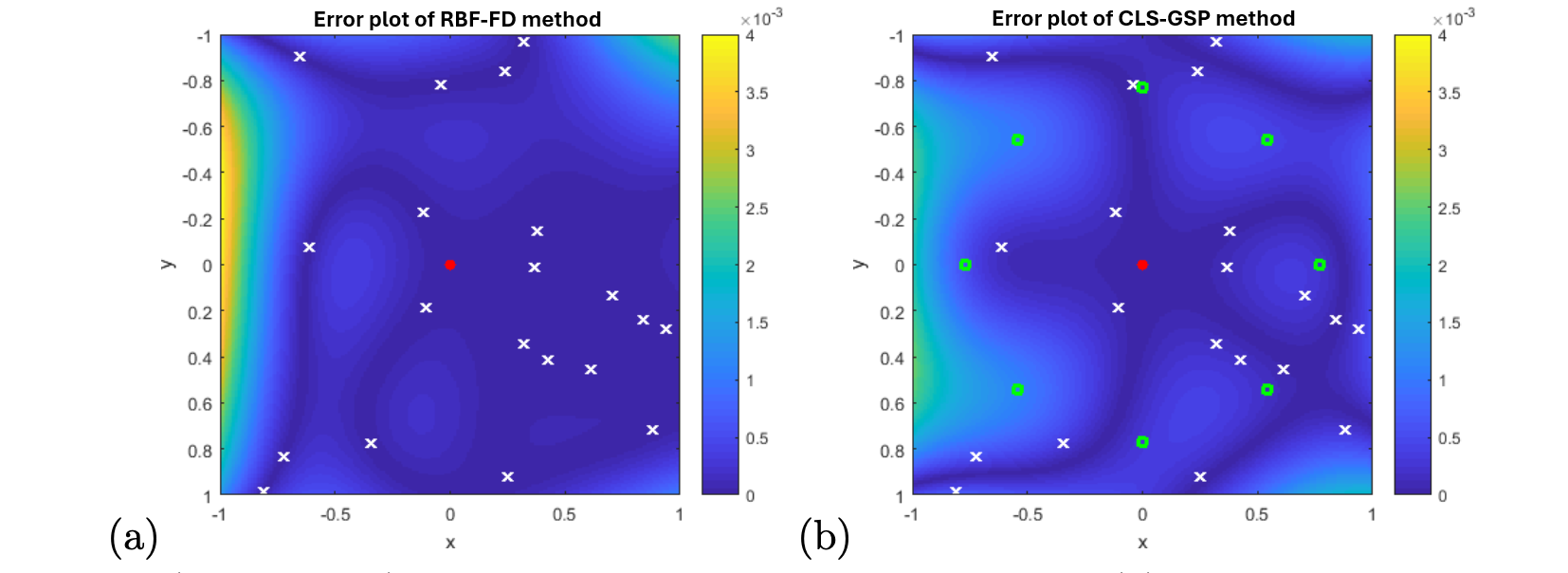}
\caption{(Section \ref{Ex: Shape_Para}) Error in the local reconstruction for (a) the RBF-FD method and (b) the CLS-GSP method using IQ-RBF. The white crosses and the green circle present the location of the sample and ghost points, respectively. The red dot is the origin.}
\label{Fig.Shape1}
\end{figure}

\begin{table}[!ht]
\captionsetup{singlelinecheck=off}
\centering
\begin{tabular}{ |c|cc|cc| } 
\hline
$c\bar{r}^2$ & Error & Rank & Error & Rank
\\
 & in RBF-FD & & in CLS-GSP &\\
\hline
$10^{-8}$ & $4.7959\times 10^{-3}$ & $12$ & $3.9414\times 10^{-3}$ & $10$\\
$10^{-7}$ & $4.7959\times 10^{-3}$ & $12$ & $3.9414\times 10^{-3}$  & $10$\\
$10^{-6}$ & $4.7959\times 10^{-3}$ & $12$ & $3.9414\times 10^{-3}$  & $10$\\
$10^{-5}$ & $4.6101\times 10^{-3}$ & $12$ & $2.1457\times 10^{-3}$  & $10$\\
$10^{-4}$ & $1.3323\times 10^{-4}$ & $12$ & $2.3067\times 10^{-5}$  & $10$\\
$10^{-3}$ & $1.5363\times 10^{-5}$ & $17$ & $4.6044\times 10^{-6}$  & $12$\\
$10^{-2}$ & $4.6426\times 10^{-5}$ & $24$ & $4.1861\times 10^{-5}$  & $13$\\
$10^{-1}$ & $2.5375\times 10^{-4}$ & $26$ & $5.1265\times 10^{-4}$  & $13$\\
$10^0$ & $3.5709\times 10^{-3}$ & $26$ & $1.8594\times 10^{-3}$  & $13$\\
$10^1$ & $4.3898\times 10^{-3}$ & $26$ & $3.0790\times 10^{-3}$  & $13$\\
$10^2$ & $6.5927\times 10^{-3}$ & $26$ & $3.5674\times 10^{-3}$  & $13$\\
$10^3$ & $5.0129\times 10^{-3}$ & $26$ & $3.6423\times 10^{-3}$ & $13$\\
$10^4$ & $4.8179\times 10^{-3}$ & $26$ & $3.6509\times 10^{-3}$  & $13$\\
$10^5$ & $4.7981\times 10^{-3}$ & $26$ & $3.6518\times 10^{-3}$ & $13$\\
\hline
\end{tabular}
\caption{(Section \ref{Ex: Shape_Para}) The error in the numerical Laplacian and the rank of $
\left(\protect\begin{smallmatrix}
\boldsymbol{\Phi} & \textbf{P}\\
\textbf{P}^T & \textbf{0}\\
\protect\end{smallmatrix} \right)
$ in the RBF-FD method and the rank of $\left(\protect\begin{smallmatrix}
\boldsymbol{\Psi} & \textbf{P}\\
\hat{\textbf{P}}^T & \textbf{0}\\
\protect\end{smallmatrix}\right)
$ in the CLS-GSP method with different shape parameters.}
\label{Table.Shape}
\end{table}

\begin{figure}[!ht]
\centering
\includegraphics[width=\textwidth ]{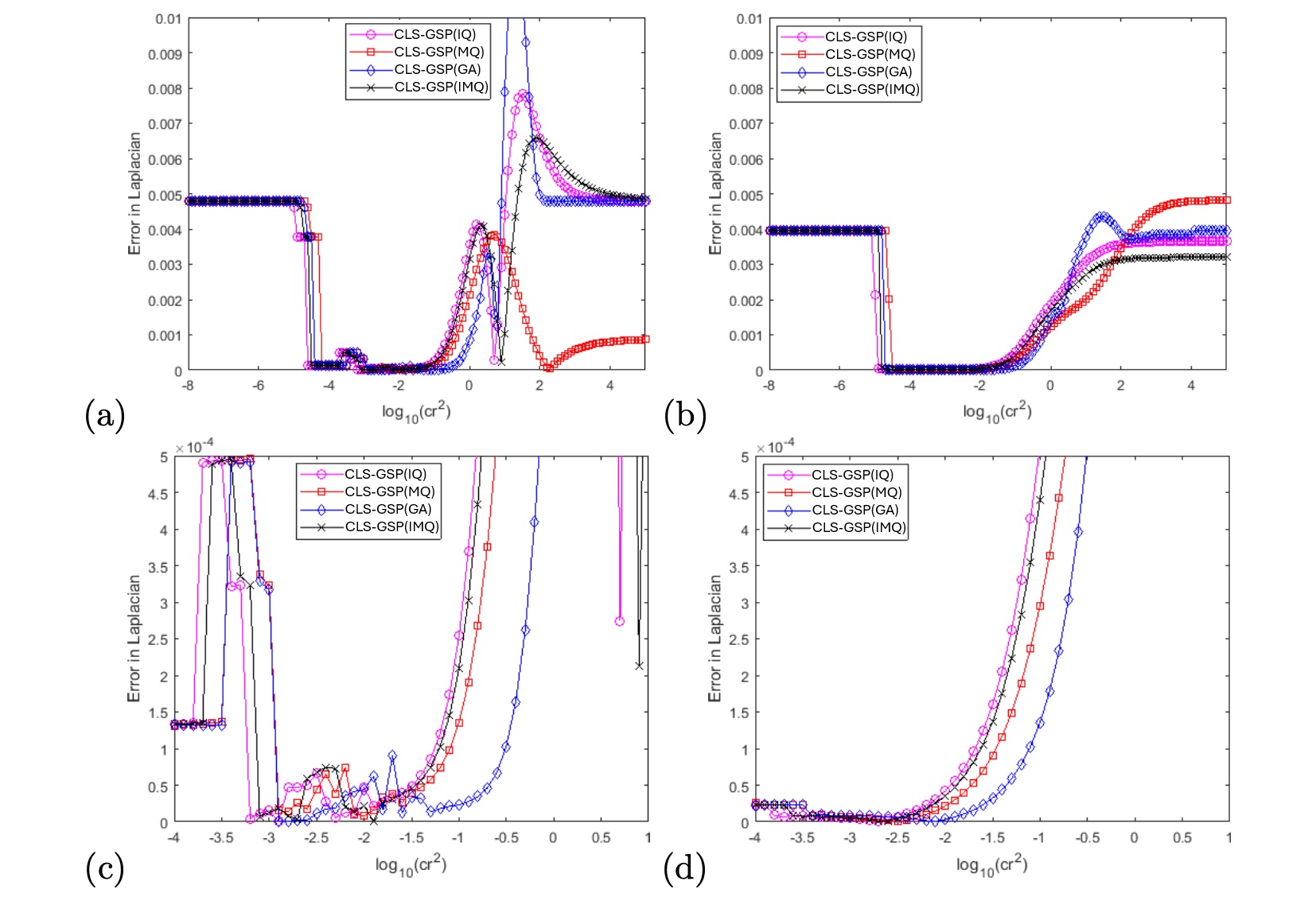}
\caption{(Section \ref{Ex: Shape_Para}) The error in the numerical Laplacian versus the shape parameter using RBF-FD and CLS-GSP methods with 4 common types of basis functions. (a) The RBF-FD with{  $-8\leq \log (c\bar{r}^2)\leq 5$, (b) the CLS-GSP with $-8\leq \log(c\bar{r}^2)\leq 5$, (c) the RBF-FD with $-4 \leq \log(c\bar{r}^2)\leq -1$ and (d) the CLS-GSP with $-4\leq \log(c\bar{r}^2)\leq -1$.}}
\label{Fig.Shape2}
\end{figure}

\begin{figure}[!ht]
\centering
\includegraphics[width=\textwidth ]{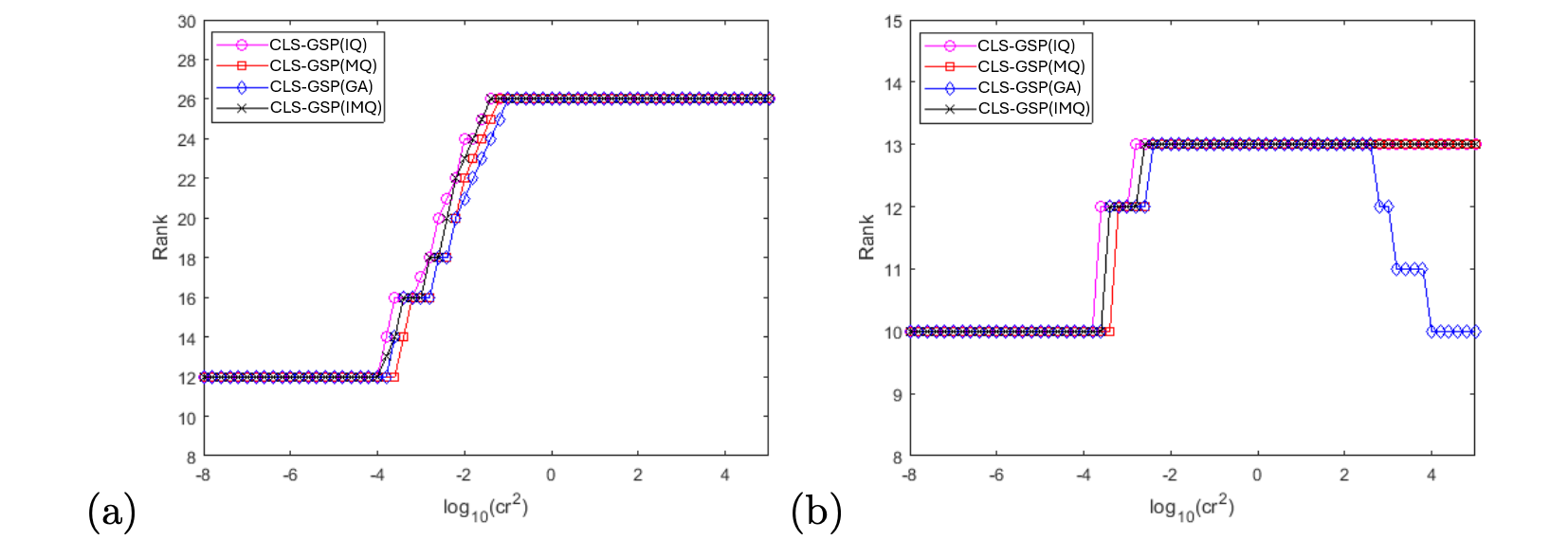}
\caption{(Section \ref{Ex: Shape_Para}) The rank of the corresponding matrices in (a) the RBF-FD and (b) the CLS-GSP method versus the shape parameter with 4 common types of basis functions. 
}
\label{Fig.Shape3}
\end{figure}

\subsection{Sensitivity of the Shape Parameter}
\label{Ex: Shape_Para}

In this section, we test the sensitivity of the shape parameter in both the RBF-FD and CLS-GSP methods. The condition for a certain matrix to be full rank, as required in Lemma \ref{Lemma A} and Theorem \ref{Thm:Consistency}, depends on the choice of the shape parameter in the basis function. In this example, we generate $20$ sample points in $[-1,1]^2$ and examine the performance of both the RBF-FD method and the CLS-GSP method with different types of basis functions and different values of the shape parameter $c$. The underlying function is given by $u(x,y)=\exp{(-0.25x)}\cos(0.2y)$. The exact Laplacian at the origin is $0.0225$. We include the polynomial basis up to the second order in both methods. The ghost points in our proposed CLS-GSP are chosen to be $\left\{\left(\bar{r}\cos \frac{k\pi}{4},\bar{r}\sin \frac{k\pi}{4}\right)\right\}_{k=1}^8$, where $\bar{r}=0.7682$ is the average distance of the $20$ sample points from the origin.

Figure \ref{Fig.Shape1} illustrates the error in the approximation obtained by both the conventional RBF and our CLS-GSP approach. Remarkably, with only 8 basis functions in our CLS-GSP approach, we achieve an approximation of comparable accuracy to the typical RBF approach, which requires 20 basis elements. To test the sensitivity, we maintain the same set of sampling points while varying the parameter $c$. Given that all four commonly used RBFs listed are functions of $cr^2$, it is natural to select $c$ such that $c\bar{r}^2$ remains constant. In our test case, we fix $c$ such that $c\bar{r}^2=10^k$, where $-8\leq k \leq 5$. To determine the optimal choice of the shape parameter, we analyze the error in the numerical Laplacian at the origin and monitor the rank of the matrices
$$
\begin{pmatrix}
\boldsymbol{\Phi} & \textbf{P}\\
\textbf{P}^T & \textbf{0}\\
\end{pmatrix}
\, \mbox{ and } \,
\begin{pmatrix}
\boldsymbol{\Psi} & \textbf{P}\\
\hat{\textbf{P}}^T & \textbf{0}\\
\end{pmatrix} 
$$
in the RBF-FD method and the CLS-GSP method, respectively. We compute the rank numerically by counting the number of singular values greater than the tolerance $\epsilon=10^{-10}$. The results are shown in Table \ref{Table.Shape} for the IQ-RBF case. Initially, we observe that both methods yield large errors when $c$ is extremely large. Conversely, if the shape parameter $c$ is extremely small, both methods encounter a rank-deficiency problem, resulting in a large error in the numerical Laplacian.

We repeated the same experiments to determine a suitable range of shape parameter $c$ but replaced the IQ-type basis with the MQ-, GA-, and IMQ-types, respectively. Figure \ref{Fig.Shape2}(a-b) illustrates the errors in the numerical Laplacian as we vary the quantity $\log(c\bar{r}^2)$. Our proposed CLS-GSP method generally approximates the Laplacian more accurately than the RBF-FD method. Further zooming in on the region $c\bar{r}^2\in[10^{-4},10^{-1}]$, as shown in Figure \ref{Fig.Shape2}(c-d), reveals that the fluctuation in the error in our proposed CLS-GSP method is smaller than that obtained by the RBF-FD method. This indicates that the CLS-GSP method is less sensitive to small changes in the shape parameter $c$.

Figure \ref{Fig.Shape3} illustrates the rank of the corresponding matrices in both methods versus the choice of the shape parameter $c$. Across all types of basis functions, the RBF-FD method exhibits a rank deficiency problem (with full rank being $26$) when $\log_{10}\left(c\bar{r}^2\right)$ drops below roughly $-1.12$. In contrast, our proposed CLS-GSP method (specifically the GA-type) encounters a rank deficiency problem (with full rank being $13$) only when $\log_{10}\left(c\bar{r}^2\right)$ drops below approximately $-2.5$. As discussed in the analysis in Section \ref{Sec:Consistency}, achieving a full-rank matrix is crucial for obtaining the consistency result. Hence, our proposed CLS-GSP method offers a broader range of the shape parameter $c$ to ensure the necessary consistency in the numerical approximation of the required derivative.

Regarding the behavior of the GA-type basis functions, it is evident that the rank of the matrix  $\begin{pmatrix}
\boldsymbol{\Psi} & \textbf{P} \\
\hat{\textbf{P}}^T & \textbf{0} 
\end{pmatrix}$ decreases as we increase the parameter $c\bar{r}^2$ and, consequently, the shape parameter $c$. This phenomenon arises because the support of the Gaussian basis functions becomes excessively narrow with larger values of $c$. Specifically, when $\log_{10}\left(c\bar{r}^2\right)$ is around 2, the corresponding shape parameter $c$ is approximately 170. This indicates that the standard deviation of each Gaussian basis function is roughly 0.05. Comparing this scale to the domain size, which is $O(1)$, it becomes clear that the GA-type basis functions struggle to provide an accurate approximation to the underlying function.

\subsection{Two Dimensional Poisson Equations}
\label{Ex:Poisson}

In this example, we apply the CLS-GSP method to solve Poisson's equation in different 2D domains, defined by the equation:
$$
\left\{\begin{matrix}
\Delta u = f(x,y), & \text{ in } \Omega \\ 
u = g(x,y), & \text{ on } \partial \Omega \, .
\end{matrix}\right.
$$
To mitigate one-sided stencils near the boundary $\partial \Omega$, we incorporate a layer of boundary nodes outside the domain $\Omega$. In our approach, we presume that the function $g(x,y)$ is specified not only on the boundary $\partial \Omega$ but also analytically defined in the exterior, ensuring precise values when necessary. If $g$ is solely provided by an explicit expression on $\partial \Omega$, we determine the function value on these boundary nodes through a normal extension, ensuring $\mathbf{n}\cdot \nabla g=0$, where $\mathbf{n}$ represents the outward normal along $\partial \Omega$. This is achieved by projecting the boundary nodes onto the boundary. Furthermore, if data points are available on the boundary, the value assigned to a boundary node can be determined through additional interpolation on $\partial \Omega$.

Let $N$ denote the number of meshless sample points in the computational domain, and $N_b$ represent the number of boundary nodes outside the domain. Consequently, we obtain a DM of the Laplacian operator given by: $L_g = \begin{pmatrix}
L & L_1
\end{pmatrix}$, where $L$ is an $N \times N$ matrix associated solely with the sample points within the computational domain, and $L_1$ is an $N \times N_b$ matrix containing the coefficients associated with the boundary nodes in $\Omega_b$. This operator operates on all $N+N_b$ points and provides the numerical Laplacian at the $N$ sample points within the computational domain. Set 
\begin{eqnarray*}
\mathbf{f} &=& \left[ f(x_1,y_1), f(x_2,y_2), \cdots, f(x_N,y_N) \right]^T \nonumber\\
\mathbf{g} &=& \left[ g(\hat{x}_1,\hat{y}_1), g(\hat{x}_2,\hat{y}_2), \cdots, g(\hat{x}_{N_b},\hat{y}_{N_b}) \right]^T \, , \nonumber
\end{eqnarray*}
where $(x_i,y_i), i = 1,{ \ldots}, N$ are sample points in $\Omega$ and $(\hat{x}_i,\hat{y}_i), i = 1, { \ldots}, N_b$ are boundary nodes in $\Omega_b$. Therefore, the solution $u$ at the interior sample points, denoted by $\mathbf{u}$, can be computed using $L \mathbf{u} + L_1 \mathbf{g} = \mathbf{f}$, which implies that we solve $L \mathbf{u} = \mathbf{f} - L_1 \mathbf{g}$.

\begin{figure}[!ht]
\centering
\includegraphics[width=\textwidth ]{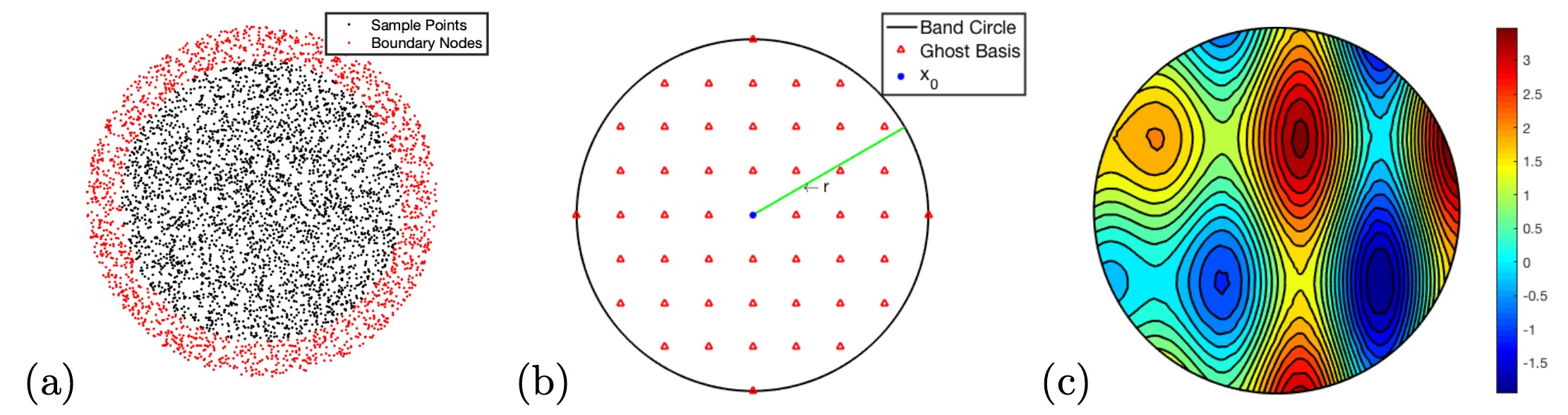}
\caption{(Example \ref{Ex:Poisson} on a disc) (a) The sample points. (b) The allocation of 49 ghost sample points. (c) The numerical solution to the Poisson equation on a disc.}
\label{Fig.Poisson0}
\end{figure}
\begin{figure}[!ht]
\centering
\includegraphics[width=\textwidth]{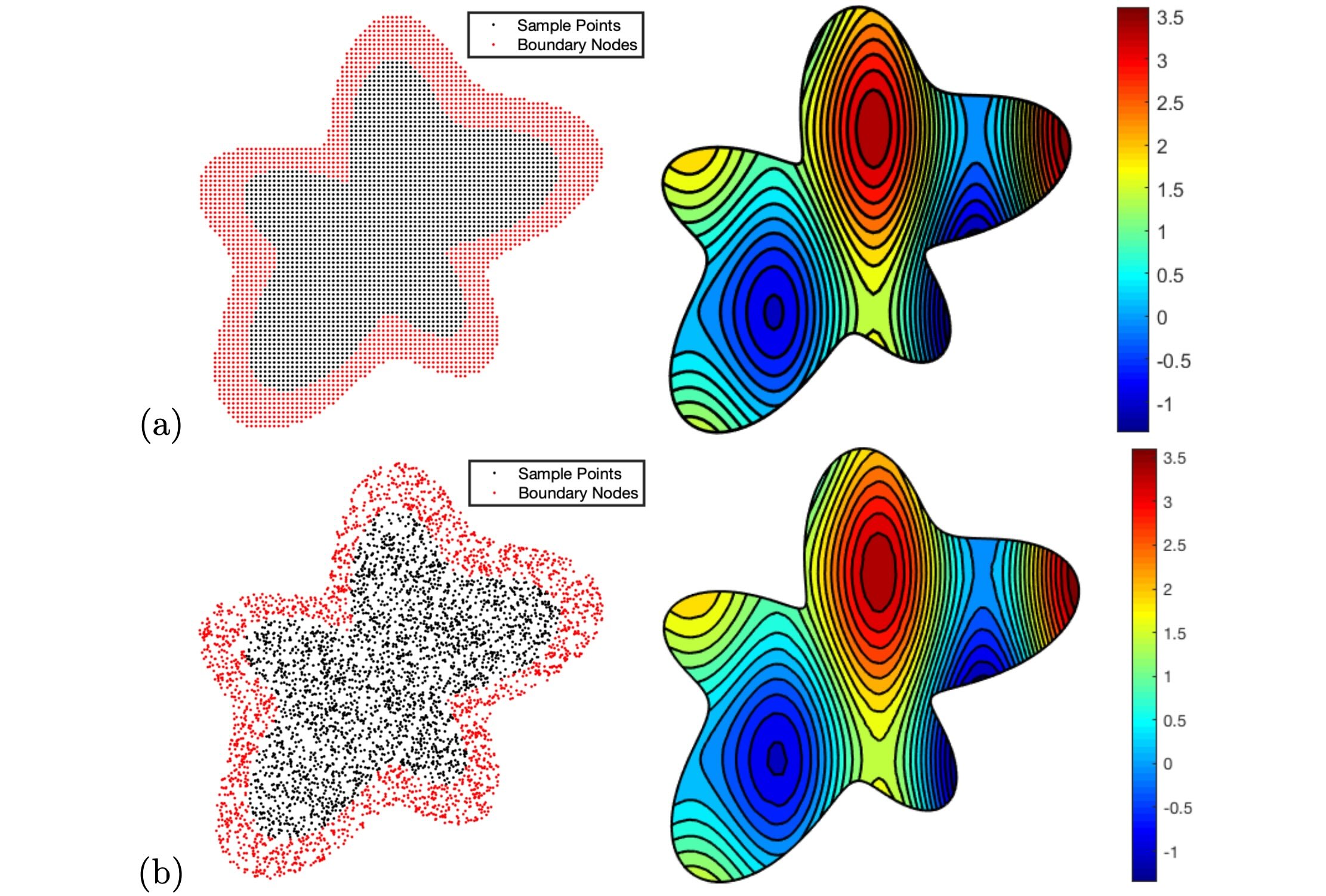}
\caption{{ (Example \ref{Ex:Poisson}: the numerical solution to the Poisson equation on an irregular domain) (a) Uniform sample points and (b) random sample points.}}
\label{Fig.Poisson2}
\end{figure}

In the first domain, we consider a simple disc $\Omega = { (x,y): x^2 + y^2 \leq 2^2}$. Both functions $f(x,y)$ and $g(x,y)$ are determined from the exact solution given by $u(x,y) = 1 + \sin(4x) + \cos(3x) + \sin(2y)$. We select $N=3159$ random points from $x^2 + y^2 \leq 2^2$ and $N_b = 1841$ random boundary nides from $2^2 < x^2 + y^2 \leq 2.5^2$ in the boundary domain $\Omega_b$. These points are plotted in Figure \ref{Fig.Poisson0}(a). Some sample points are located very close, which could cause instability problems with a straightforward implementation of the RBF-FD method based on the $K$-nearest neighbors. In this example, we choose $n=60$ (i.e., we have 60 local neighbors for each sample point $\mathbf{x}_0$) to generate the weights vector of the Laplacian matrix at $\mathbf{x}_0$. We choose 49 ghost basis samples uniformly within a distance of $r = \frac{1}{2} \max_i d(\mathbf{x}_0,\mathbf{x}_i)$, as shown in Figure \ref{Fig.Poisson0}(b). The numerical solution is presented in Figure \ref{Fig.Poisson0}(c), and the $L_\infty$ error in the solution is around $4\times 10^{-3}$.

In the second test domain, we consider an irregular flower shape given by $\Omega = \{ (\theta, r) : r\leq r_{\max}(\theta)=1.4+0.4\sin(5\theta)+0.4\sin(2\theta), \theta \in [0, 2\pi) \}$. The functions $f(x,y)$ and $g(x,y)$ are again determined from the same exact solution. In this example, we test on two different sets of sample points. In the first sampling, we select 2677 samples uniformly inside the computational domain with 2066 boundary nodes chosen in $\Omega_b = \{ (\theta, r) : r_{\max}(\theta) < r\leq r_{\max}(\theta)+0.5, \theta \in [0, 2\pi)\}$. The second case uses 2782 random samples in $\Omega$ with 2782 boundary nodes in $\Omega_b$. Figure \ref{Fig.Poisson2} shows our computed solution in this irregular domain using the uniform samples (in (a)) and the random samples (in (b)). The $L_\infty$ error of these solutions is around $3\times 10^{-4}$ and $2\times 10^{-3}$ for the uniform and the random samplings, respectively.

\begin{figure}[!ht]
\centering
\includegraphics[width=\textwidth ]{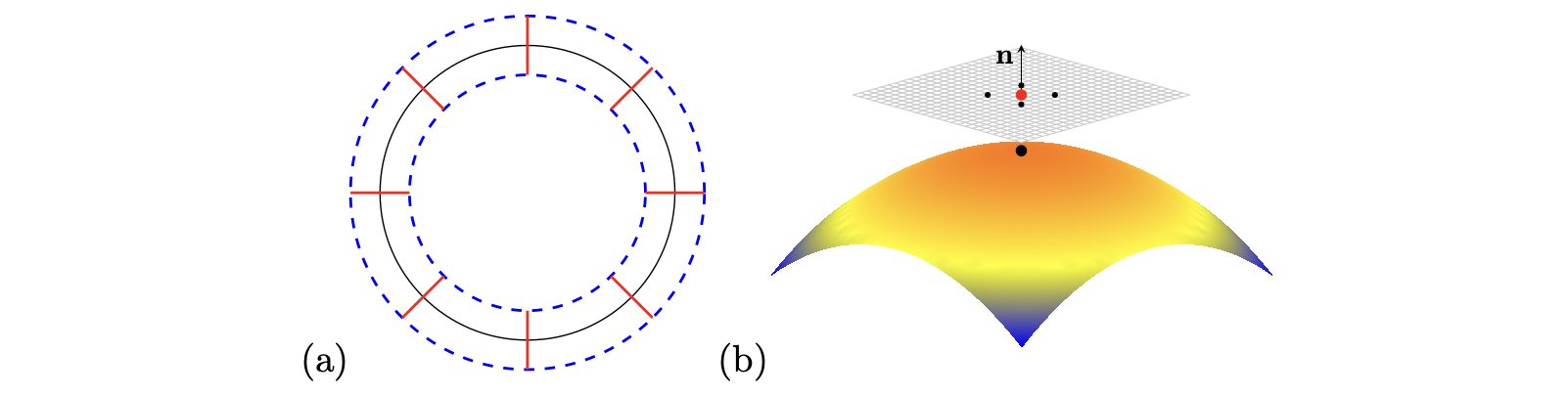}
\caption{(Example \ref{Ex:UnitSphere}) (a) The Eulerian approach to the Laplace-Beltrami operator. The interface is plotted in a black-solid line. Between the two blue dashed lines is the tube area extended from the interface. The red lines show the normal direction of some points on the interface. Along the normal direction, we impose $\n \cdot \nabla f= 0$. (b) The Lagrangian approach to the Laplace-Beltrami operator. The local reconstruction involves the construction of the tangent plane using a normal $\n$ estimated from the data and a projection step of sampled points on a local neighborhood.}
\label{Fig:EulerianLagrangianLB}
\end{figure}

\subsection{The Laplace-Beltrami Operator}
\label{Ex:UnitSphere}

There are several methods for approximating the Laplace-Beltrami (LB) operator on manifolds. One Eulerian approach involves extending the $\mathbb{R}^d$ manifold to $\mathbb{R}^{d+1}$ with the constraint $\mathbf{n} \cdot \nabla f = 0$, where $\mathbf{n}$ denotes the normal on the manifold $S$, as illustrated in Figure \ref{Fig:EulerianLagrangianLB}(a). Various methods have been developed within this Eulerian framework. For instance, an earlier approach by Green \cite{gre05} utilized the level set method \cite{oshset88} and the projection method. The closest point method (CPM) \cite{macruu08,ruumer08,macbraruu11} has also been applied to this problem. Following the basic embedding concept outlined in \cite{wonleu16} for surface eikonal equations, we have devised a similar straightforward embedding approach for the LB eigenproblem, as detailed in \cite{leeleu23}. Additionally, methods like RBF-OGr \cite{pir12} and RBF-FD are utilized within the RBF community. Another approach is the Lagrangian framework, which explicitly deals with sample points on the manifold. In this approach, neighbor points for each sample point are projected onto the local tangent plane of the manifold. Subsequently, these methods use the standard Laplacian on the tangent plane to compute the Laplace-Beltrami operator, incorporating local geometric information. A graphical illustration is provided in Figure \ref{Fig:EulerianLagrangianLB}(b). This technique finds widespread use in point cloud data analysis, as seen in methods proposed in \cite{belsunwan09}, as well as in some local mesh methods developed in \cite{llwz12,lailiazha13}. Other methods, such as the grid-based particle method (GBPM) and the cell-based particle method (CBPM) \cite{leuzha0801,leulowzha11,honleuzha14}, and a virtual grid method \cite{wanleuzha18}, also employ similar principles.

In this example, we focus on approximating the Laplace-Beltrami operator on a two-dimensional surface embedded in $\mathbb{R}^3$ and parameterized by $(s_1,s_2)$. The Laplace-Beltrami operator acting on a function $f:S\rightarrow \mathbb{R}$ is defined as: $\Delta_{S} f = \sum_{\alpha,\beta = 1}^{2} \frac{1}{\sqrt{g}}\frac{\partial}{\partial s_{\alpha}} \left( \sqrt{g} g^{\alpha\beta} \frac{\partial f}{\partial s_{\beta}} \right)$. Here, the metric $[g_{\alpha\beta}]$ is given by $g_{\alpha\beta} = \frac{\partial \mathbf{X}}{\partial s_{\alpha}}\cdot\frac{\partial \mathbf{X}}{\partial s_{\alpha}}$, and the Jacobian of the metric is denoted as $g = g_{11}g_{22}-g_{12}g_{21}$. The matrix $[g^{\alpha\beta}]$ represents the inverse of $[g_{\alpha\beta}]$. It's important to note that the Laplace-Beltrami operator is geometrically intrinsic, meaning it's independent of the parametrization. Thus, computations based on local parametrization remain valid and retain geometric intrinsic properties.

\begin{figure}[!ht]
\centering
\includegraphics[width=\textwidth]{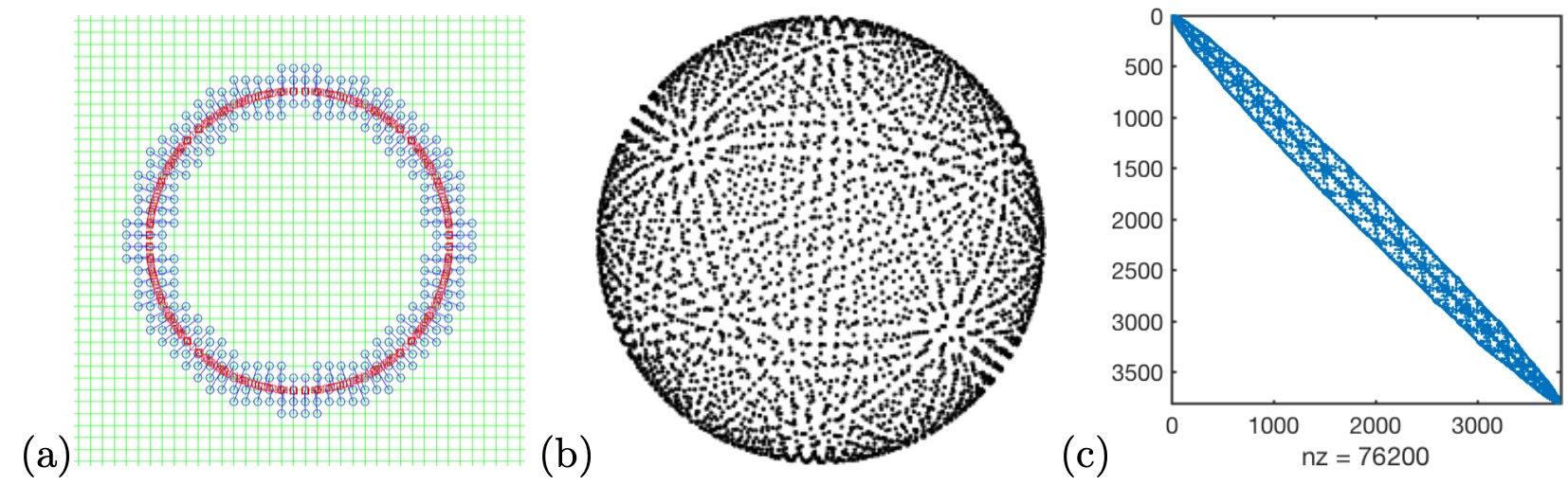}
\caption{(Example \ref{SubSubSec:UnitSphere}) { (a) An illustration of the GBPM/CPM representation of a circle. The underlying mesh is plotted using the green lines. The \textit{activated} grid points are plotted using blue circles, and their projection is shown in red squares. These red squares are the sample points of our manifold. (b) The GBPM sampling of the unit sphere. (c) The sparse pattern of the discretized LB operator after applying the sparse reverse Cuthill-McKee ordering.}}
\label{Fig.GBPMExample}
\end{figure}

\subsubsection{Eigenvalues of the LB Operator}
\label{SubSubSec:UnitSphere}

In this example, we have employed the GBPM or CPM to sample the sphere. This involves selecting points closest to the underlying mesh points near the interface. The illustration in Figure \ref{Fig.GBPMExample}(a) plots a typical point cloud sampling of the interface/surface using GBPM. Here, the underlying mesh is shown with solid lines, while all active grids near the interface (plotted as a circle) are represented by blue circles. The associated closest points on the interface are marked with red squares, and solid line links show the correspondence between each pair. It's worth noting that the resulting point cloud from GBPM may exhibit non-uniformity, as the two closest points to two mesh points can be very close or even identical. A crucial parameter in generating the GBPM/CPM point cloud is the width of the computational band, which we have set to $1.5\Delta x$ in our numerical example. This implies that for each grid point within a distance less than $1.5 \Delta x$ from the interface, we compute its projection onto the manifold. These projected points serve as sample points for representing the interface.

\begin{figure}[!ht]
\centering
\includegraphics[width=\textwidth]{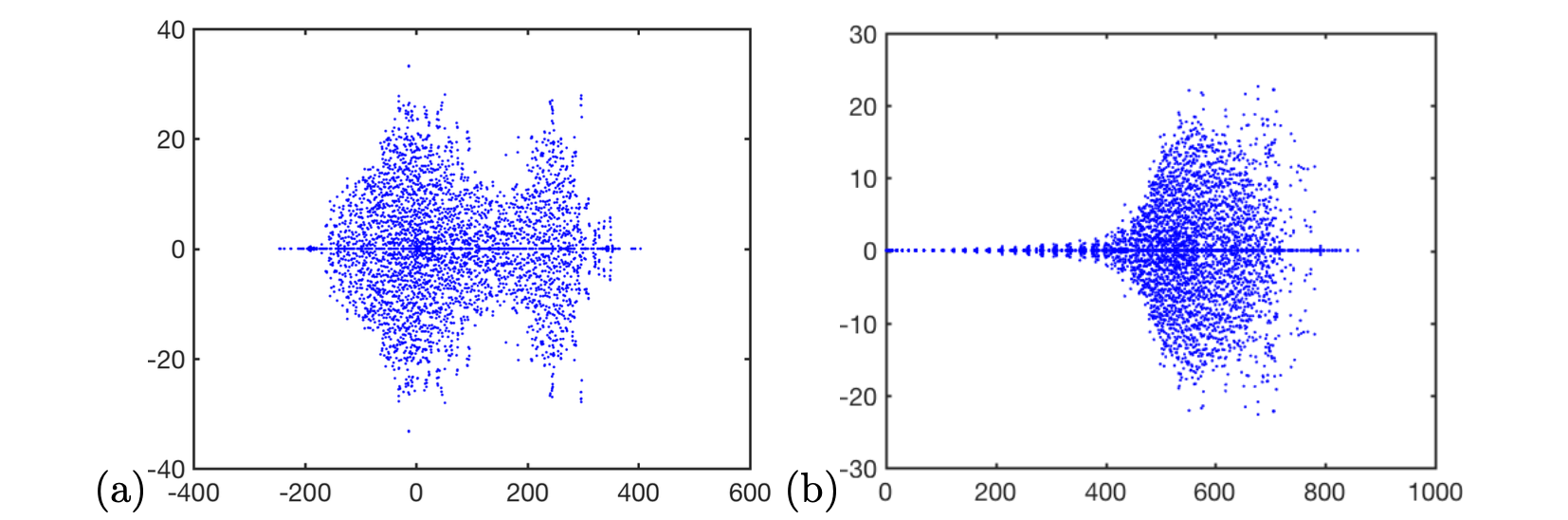}
\caption{(Example \ref{SubSubSec:UnitSphere}) Eigenvalues of the discretized Laplace-Beltrami operator $-\Delta_S$ using different approaches in the complex plane. There are, in total, $N=3795$ nodes in the sample. (a) The least-squares method uses a second-order polynomial in \cite{leulowzha11}. (b) The proposed CLS-GSP method.}
\label{Fig.UnitEigen}
\end{figure}

Here, we compute the eigenvalues of the Laplace-Beltrami operator on the unit sphere represented by the GBPM sampling. The sample points are shown in Figure \ref{Fig.GBPMExample}(b).

First, we construct the DM of the operator $-\Delta_S$ using our proposed CLS-GSP method, denoted as $L$. Then, we compute all the eigenvalues $\lambda$ and eigenvectors $\nu$ of the matrix $L$ using the \textsf{MATLAB} function \textsf{eig}. These eigenvalues and eigenvectors can serve as a good discrete approximation to the eigenvalues and eigenfunctions of the continuous eigenvalue problem $-\Delta_S \nu=\lambda \nu$. After obtaining these computed eigenvalues, we can plot them onto the complex plane. Figure \ref{Fig.UnitEigen} shows the eigenvalues of $L$ from (a) the least-squares method as in the GBPM \cite{leulowzha11}, and (b) our proposed CLS-GSP method.

In many applications, having the real part of all eigenvalues of the Laplace-Beltrami operator $-\Delta_{S}$ staying non-negative is important. For example, when solving the diffusion equation on the manifold $\frac{\partial u}{\partial t}=\Delta_S u$, an eigenvalue of the Laplace-Beltrami operator $-\Delta_{S}$ with a negative real part can lead to instability in the numerical solution. In the GBPM sampling, two sample points can collide at the same place on the surface, leading to their corresponding projection points being arbitrarily close to the tangent plane. However, our proposed CLS-GSP works better in this example. All eigenvalues from our proposed CLS-GSP are on the right-hand side of the complex plane, showing that the real part of all eigenvalues is non-negative.

\begin{figure}[!ht]
\centering
\includegraphics[width=\textwidth]{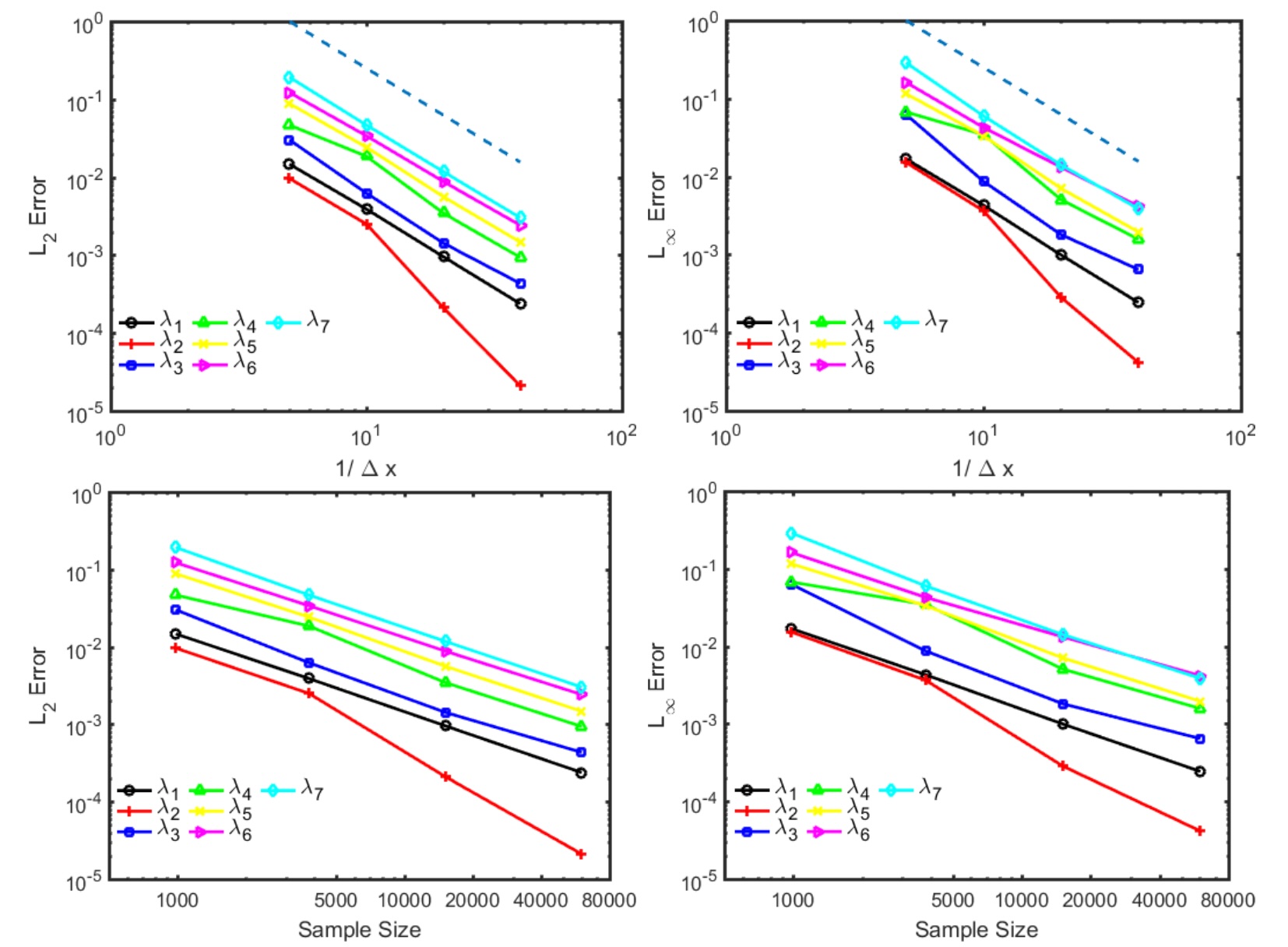}
\caption{(Example \ref{SubSubSec:ConvergenceEigenvaluesLB}) First row: The $E_{2,m}$ and $E_{\infty,m}$ of the first seven eigenvalues in different discretizations. The dashed lines are the reference curve for the second-order convergence. Second row: The $E_{2,m}$ and $E_{\infty,m}$ of the first seven eigenvalues in different sample sizes.}
\label{Fig.EigenConvergence}
\end{figure}

\begin{table}[!ht]
\centering
\begin{tabular}{|c| c |c |c |c|} 
\hline
$\Delta x$ & $0.2$ & $0.1$ & $0.05$ & $0.025$ \\ 
Samples & $984$ & $3795$ & $15127$ & $60322$ \\ 
\hline
$\lambda_1 = 2$&$1.47\times 10^{-2}$&$3.89\times 10^{-3}$&$9.53\times 10^{-4}$&$2.38\times 10^{-4}$\\
$\lambda_2 = 6$&$9.64\times 10^{-3}$&$2.50\times 10^{-3}$&$2.10\times 10^{-4}$&$2.10\times 10^{-5}$\\
$\lambda_3 = 12$&$3.01\times 10^{-2}$&$6.23\times 10^{-3}$&$1.42\times 10^{-3}$&$4.33\times 10^{-4}$\\
$\lambda_4 = 20$&$4.66\times 10^{-2}$&$1.86\times 10^{-2}$&$3.45\times 10^{-3}$&$9.35\times 10^{-4}$\\
$\lambda_5 = 30$&$8.80\times 10^{-2}$&$2.42\times 10^{-2}$&$5.62\times 10^{-3}$&$1.47\times 10^{-3}$\\
$\lambda_6 = 42$&$1.22\times 10^{-1}$&$3.39\times 10^{-2}$&$8.80\times 10^{-3}$&$2.42\times 10^{-3}$\\
$\lambda_7 = 56$&$1.92\times 10^{-1}$&$4.67\times 10^{-2}$&$1.18\times 10^{-2}$&$3.02\times 10^{-3}$\\
\hline
\end{tabular}
\caption{(Example \ref{SubSubSec:ConvergenceEigenvaluesLB}) The $E_{2,m}$ of various eigenvalues of the Laplace-Beltrami operator on the unit sphere sampled by the GBPM with different scales.}
\label{TB.Err2}
\end{table}
\begin{table}[!ht]
\centering
\begin{tabular}{|c| c |c |c |c|} 
\hline
$\Delta x$ & $0.2$ & $0.1$ & $0.05$ & $0.025$ \\
Samples & $984$ & $3795$ & $15127$ & $60322$ \\ 
\hline
$\lambda_1 = 2$&$1.68\times 10^{-2}$&$4.29\times 10^{-3}$&$9.92\times 10^{-4}$&$2.43\times 10^{-4}$\\
$\lambda_2 = 6$&$1.52\times 10^{-2}$&$3.64\times 10^{-3}$&$2.84\times 10^{-4}$&$4.15\times 10^{-5}$\\
$\lambda_3 = 12$&$6.25\times 10^{-2}$&$8.68\times 10^{-3}$&$1.81\times 10^{-3}$&$6.44\times 10^{-4}$\\
$\lambda_4 = 20$&$6.72\times 10^{-2}$&$3.45\times 10^{-2}$&$5.07\times 10^{-3}$&$1.57\times 10^{-3}$\\
$\lambda_5 = 30$&$1.15\times 10^{-1}$&$3.33\times 10^{-2}$&$7.05\times 10^{-3}$&$1.95\times 10^{-3}$\\
$\lambda_6 = 42$&$1.61\times 10^{-1}$&$4.26\times 10^{-2}$&$1.32\times 10^{-3}$&$4.16\times 10^{-3}$\\
$\lambda_7 = 56$&$2.88\times 10^{-1}$&$5.99\times 10^{-2}$&$1.41\times 10^{-2}$&$3.86\times 10^{-3}$\\
\hline
\end{tabular}
\caption{(Example \ref{SubSubSec:ConvergenceEigenvaluesLB}) The $E_{\infty,m}$ of various eigenvalues of the Laplace-Beltrami operator on the unit sphere sampled by the GBPM with different scales.}
\label{TB.ErrInf}
\end{table}

\subsubsection{Convergence of the Eigenvalues of the Laplace-Beltrami Operator}
\label{SubSubSec:ConvergenceEigenvaluesLB}

In this part, we test the convergence of the computed eigenvalues of the LB operator on the unit sphere. On the unit sphere, the exact value of the $m$-th eigenvalue can be analytically computed and is given by $\lambda_m = m(m+1)$ with multiplicity $2m+1$. We use the following normalized $L_2$ and $L_{\infty}$ errors to measure our algorithm's performance:
$
E_{2,m} = \sqrt{\frac{1}{2m+1}\sum_{i}\left(\frac{\lambda_{m,i}-\lambda_m}{\lambda_m} \right)^2}$
and
$E_{\infty,m} = \max_i \left\vert \frac{\lambda_{m,i}-\lambda_m}{\lambda_m} \right\vert$.
Figure \ref{Fig.EigenConvergence} shows that the convergence rate of our algorithm is approximately second order in both error measures. It also shows that the error decreases when the GBPM sample size is increased. Tables \ref{TB.Err2} and \ref{TB.ErrInf} show the details of the errors measured in different discretizations.

\begin{figure}[!ht]
\centering
\includegraphics[width=\textwidth]{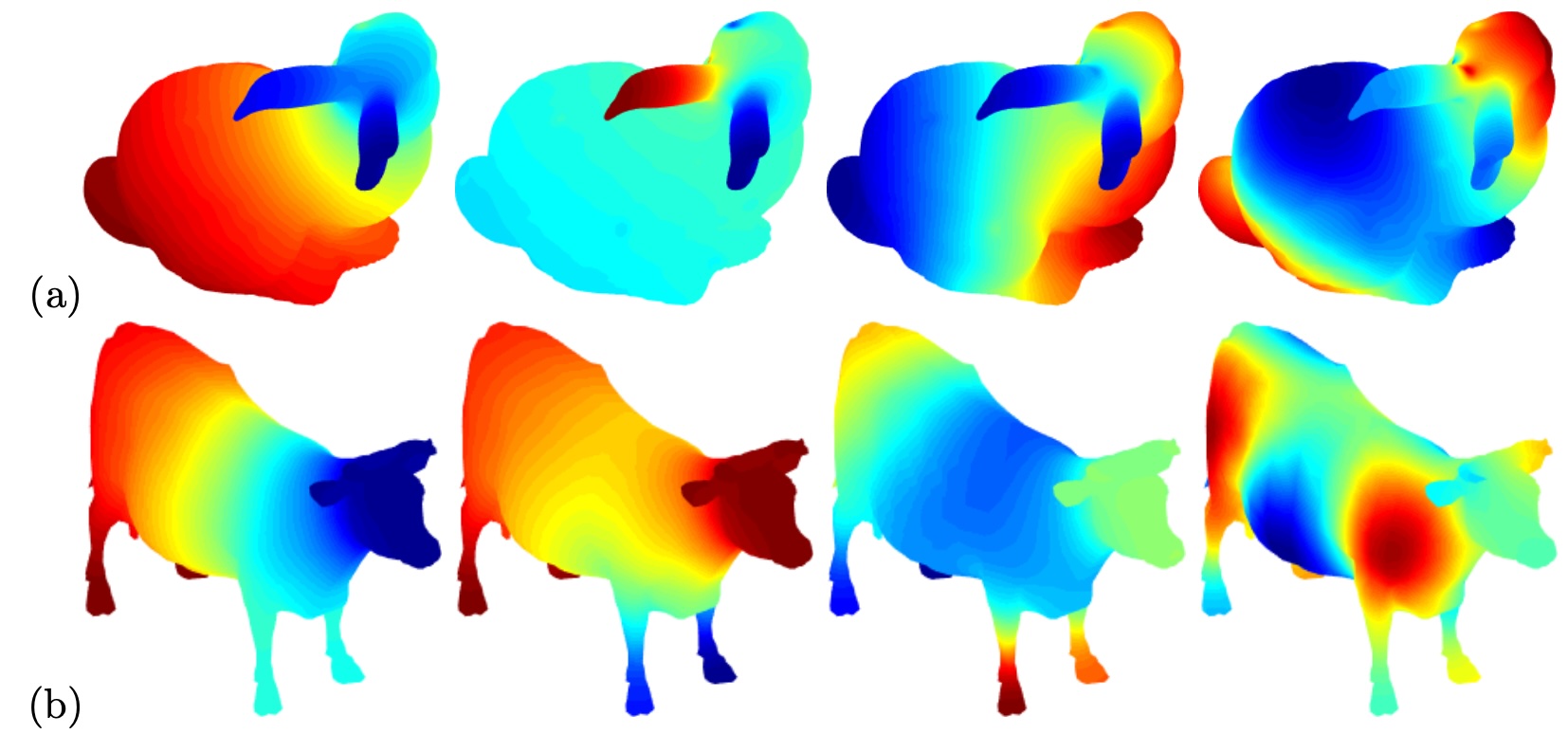}
\caption{(Example \ref{Ex:Manifold}) (a) The 1st, 2nd, 3rd and 6th eigenfunctions of the Stanford bunny. (b) The 1st, 2nd, 6th and 16th eigenfunctions of the cow.}
\label{Fig.Manifold}
\end{figure}

\subsubsection{Eigenfunctions of the LB Operator on Manifolds}
\label{Ex:Manifold}

Our method's ability in handling manifolds, as represented by various samplings, is a significant advantage. Figure \ref{Fig.Manifold} illustrates the first few eigenfunctions of two-point cloud datasets obtained from the Stanford 3D scanning repository and the public domain, showcasing the applicability of your approach across different datasets.

\section{Conclusion}
\label{Summary}

This paper introduced the constrained least-squares ghost sample points (CLS-GSP) method for approximating differential operators on domains sampled by a random set of sample points. Unlike traditional RBF interpolation methods, we formulated the problem as a least-squares one, offering more flexibility in allocating basis locations, especially when sample points are close or overlap. Additionally, we enhanced the approach presented in previous work by enforcing the local reconstruction to pass through the center data point, improving the accuracy and diagonal dominance of the resulting DM.

Given the challenge of designing optimal sample locations, we addressed the issue by introducing the concept of consistency and analyzing the behavior of the pointwise consistency error as sample locations approach the center. By incorporating polynomial basis functions up to the $k$-th order into the CLS-GSP basis, we demonstrated the method's consistency in estimating the Laplacian of a function with an order of $k-1$.

Numerical experiments showcased the accuracy and stability of the proposed CLS-GSP method. Notably, the discretized Laplace-Beltrami operator obtained using CLS-GSP has eigenvalues with positive real parts, indicating stability for solving the diffusion equation on a manifold. Our approach presents a promising framework for robust numerical algorithms in manifold-based applications.

\section*{Acknowledgment}
The work of Leung was supported in part by the Hong Kong RGC grant 16302223.

\bibliographystyle{plain}
\bibliography{syleung}

\begin{thebibliography}{10}

\bibitem{belsunwan09}
M.~Belkin, J.~Sun, and Y.~Wang.
\newblock Constructing {L}aplace operator from point clouds in rd.
\newblock In {\em Proceedings of the twentieth Annual ACM-SIAM Symposium on
  Discrete Algorithms}, pages 1031--1040. Society for Industrial and Applied
  Mathematics, 2009.

\bibitem{bkofk96}
T.~Belytschko, Y.~Krongauz, D.~Organ, M.~Fleming, and P.~Krysl.
\newblock Meshless methods: An overview and recent developments.
\newblock {\em Comput. Meth. Appl. Mech. Engrg.}, 139:3--47, 1996.

\bibitem{benuregav01}
JJ. Benito, F.~Urena, and L.~Gavete.
\newblock Influence of several factors in the generalized finite difference
  method.
\newblock {\em Applied Mathematical Modelling}, 25(12):1039--1053, 2001.

\bibitem{benuregav07}
JJ. Benito, F.~Urena, and L.~Gavete.
\newblock Solving parabolic and hyperbolic equations by the generalized finite
  difference method.
\newblock {\em Journal of computational and applied mathematics},
  209(2):208--233, 2007.

\bibitem{buh03}
M.D. Buhmann.
\newblock {\em Radial Basis Functions}.
\newblock Cambridge University Press, 2003.

\bibitem{buhnir93}
M.D. Buhmann and D.~Nira.
\newblock Spectral convergence of multiquadric interpolation.
\newblock {\em Proceedings of the Edinburgh Mathematical Society},
  36(2):319--333, 1993.

\bibitem{chechiwu15}
S.~G. Chen, M.~H. Chi, and J.~Y. Wu.
\newblock {High-Order Algorithms for Laplace--Beltrami Operators and Geometric
  Invariants over Curved Surfaces}.
\newblock {\em J. Sci. Comput.}, 65:839--865, 2015.

\bibitem{chewu13}
S.-G. Chen and J.-Y. Wu.
\newblock Discrete conservation laws on curved surfaces.
\newblock {\em SIAM J. Sci. Comput.}, 35(2):A719--A739, 2013.

\bibitem{chewu16}
S.-G. Chen and J.-Y. Wu.
\newblock Discrete conservation laws on evolving surfaces.
\newblock {\em SIAM J. Sci. Comput.}, 38(3):A1725--A1742, 2016.

\bibitem{fas07}
G.E. Fasshauer.
\newblock {\em Meshfree approximation methods with MATLAB}.
\newblock World Scientific, 2007.

\bibitem{flyleh10}
N.~Flyer and E.~Lehto.
\newblock Rotational transport on a sphere: Local node refinement with
  radialbasis functions.
\newblock {\em J. Comp. Phys.}, 229:1954--1969, 2010.

\bibitem{for88}
B.~Fornberg.
\newblock Generation of finite difference formulas on arbitrarily spaced grids.
\newblock {\em Mathematics of computation}, 51(184):699--706, 1988.

\bibitem{for96}
B.~Fornberg.
\newblock {\em A practical guide to pseudospectral methods}.
\newblock Cambridge University Press, 1996.

\bibitem{for98}
B.~Fornberg.
\newblock Classroom note: Calculation of weights in finite difference formulas.
\newblock {\em SIAM review}, 40(3):685--691, 1998.

\bibitem{forfly05}
B.~Fornberg and N.~Flyer.
\newblock {Accuracy of radial basis function interpolation and derivative
  approximations on 1-D infinite grids}.
\newblock {\em Advances in Computational Mathematics}, 23(1-2):5--20, 2005.

\bibitem{forflyrus08}
B.~Fornberg, N.~Flyer, and J.M. Russell.
\newblock Comparisons between pseudospectral and radial basis function
  derivative approximations.
\newblock {\em IMA J. Numer. Anal.}, pages 1--28, 2008.

\bibitem{gre05}
J.B. Greer.
\newblock An improvement of a recent {E}ulerian method for solving {PDE}s on
  general geometries.
\newblock {\em J. Sci. Comp.}, 29(3):321--352, 2005.

\bibitem{har71}
R.~Hardy.
\newblock Multiquadratic equations of topography and other irregular surface.
\newblock {\em Journal of Geophysical Research}, 76:1905--1915, 1971.

\bibitem{honleuzha14}
S.~Hon, S.~Leung, and H.-K. Zhao.
\newblock A cell based particle method for modeling dynamic interfaces.
\newblock {\em J. Comp. Phys.}, 272:279--306, 2014.

\bibitem{lailiazha13}
R.~Lai, J.~Liang, and H.~Zhao.
\newblock A local mesh method for solving pdes on point clouds.
\newblock {\em Inverse Problems and Imaging}, 7(3):737--755, 2013.

\bibitem{leeleu23}
Y.K. Lee and S.~Leung.
\newblock {A Simple Embedding Method for the Laplace-Beltrami Eigenvalue
  Problem on Implicit Surfaces}.
\newblock {\em Comm. App. Math. and Comp.}, 2023.

\bibitem{leulowzha11}
S.~Leung, J.~Lowengrub, and H.K. Zhao.
\newblock A grid based particle method for high order geometrical motions and
  local inextensible flows.
\newblock {\em J. Comput. Phys.}, 230:2540--2561, 2011.

\bibitem{leuzha0801}
S.~Leung and H.K. Zhao.
\newblock A grid based particle method for moving interface problems.
\newblock {\em J. Comput. Phys.}, 228:2993--3024, 2009.

\bibitem{llwz12}
J.~Liang, R.~Lai, T.W. Wong, and H.~Zhao.
\newblock Geometric understanding of point clouds using {L}aplace-{B}eltrami
  operator.
\newblock {\em Computer Vision and Pattern Recongnition (CVPR)}, pages
  214--221, 2012.

\bibitem{macbraruu11}
C.B. Macdonald, J.~Brandman, and S.~Ruuth.
\newblock Solving eigenvalue problems on curved surfaces using the closest
  point method.
\newblock {\em J. Comp. Phys.}, 230:7944--7956, 2011.

\bibitem{macruu08}
C.B. Macdonald and S.J. Ruuth.
\newblock Level set equations on surfaces via the closest point method.
\newblock {\em J. Sci. Comput.}, 35:219--240, 2008.

\bibitem{nrbd08}
V.P. Nguyen, T.~Rabczuk, S.~Bordas, and M.~Duflot.
\newblock Meshless methods: A review and computer implementation aspects.
\newblock {\em Mathematics and Computers in Simulation}, 79:763--813, 2008.

\bibitem{oshset88}
S.~J. Osher and J.~A. Sethian.
\newblock Fronts propagating with curvature dependent speed: algorithms based
  on {Hamilton-Jacobi} formulations.
\newblock {\em J. Comput. Phys.}, 79:12--49, 1988.

\bibitem{pir12}
C.~Piret.
\newblock The orthogonal gradients method: A radial basis functions method for
  solving partial differential equations on arbitrary surfaces.
\newblock {\em J. Comput. Phys.}, 231:4662--4675, 2012.

\bibitem{ruumer08}
S.J. Ruuth and B.~Merriman.
\newblock A simple embedding method for solving partial differential equations
  on surfaces.
\newblock {\em J. Comput. Phys.}, 227:1943--1961, 2008.

\bibitem{shudinyeo03}
C.~Shu, H.~Ding, and K.~S. Yeo.
\newblock Local radial basis function-based differential quadrature method and
  its application to solve two-dimensional incompressible navier-stokes
  equations.
\newblock {\em Comput. Meth. Appl. Mech. Engrg.}, 192:941--954, 2003.

\bibitem{wanliu02}
J.~G. Wang and G.~R. Liu.
\newblock A point interpolation meshless method based on radial basis
  functions.
\newblock {\em Int. J. Num. Meth. Eng.}, 54:1623--1648, 2002.

\bibitem{wanleuzha18}
M.~Wang, S.~Leung, and H.~Zhao.
\newblock {Modified Virtual Grid Difference (MVGD) for Discretizing the
  Laplace-Beltrami Operator on Point Clouds}.
\newblock {\em SIAM J. Sci. Comput.}, 40(1):A1--A21, 2018.

\bibitem{wen05}
H.~Wendland.
\newblock {\em Scattered data approximation}.
\newblock Cambridge University Press, 2005.

\bibitem{wonleu16}
T.~Wong and S.~Leung.
\newblock A fast sweeping method for eikonal equations on implicit surfaces.
\newblock {\em J. Sci. Comput.}, 67:837--859, 2016.

\bibitem{wri03}
G.~B. Wright.
\newblock Radial basis function interpolation: Numerical and analytical
  developments.
\newblock {\em Ph.D. thesis, University of Colorado, Boulder}, 2003.

\bibitem{wuchiche10}
J.-Y. Wu, M.-H. Chi, and S.-G. Chen.
\newblock A local tangential lifting differential method for triangular meshes.
\newblock {\em Mathematics and Computers in Simulation}, 80:2386--2402, 2010.

\bibitem{yin17}
N.~Ying.
\newblock {Numerical Methods for Partial Differential Equations from Interface
  Problems}.
\newblock {\em HKUST PhD Thesis}, 2017.

\end{thebibliography}

\end{document}